\date{}
\title{Hamiltonicity thresholds in Achlioptas processes}
\author{{Michael Krivelevich\thanks{School of Mathematical
Sciences, Raymond and Beverly Sackler Faculty
of Exact Sciences, Tel Aviv University, Tel Aviv 69978, Israel. Email address:
{\tt krivelev@post.tau.ac.il}. Research supported in part by
USA-Israel BSF Grant 2006-322, by grant 526/05
 from the Israel Science Foundation, and by a Pazy Memorial Award.}}
 \and {Eyal Lubetzky\thanks{Theory Group of Microsoft Research, One Microsoft Way, Redmond,
WA 98052-6399, USA. Email address: {\tt eyal@microsoft.com}.}}
\and {Benny Sudakov\thanks{Department of Mathematics, UCLA,  Los
Angeles, CA 90095, USA. Email: {\tt bsudakov@math.ucla.edu}.
Research supported in part by NSF CAREER award DMS-0546523, NSF grant DMS-0355497, and by a USA-Israeli BSF grant.}}
}
\newtheorem{theorem}{Theorem}[section]
\newtheorem{lemma}[theorem]{Lemma}
\newtheorem{corollary}[theorem]{Corollary}
\renewcommand{\epsilon}{\varepsilon}
\newcommand{\deq}{:=}
\newcommand{\whp}{\ensuremath{\text{\bf whp}}}
\newcommand{\whps}{\whp\ }
\newcommand{\greedy}{\text{\sc{Greedy }}}
\newcommand{\E}{\mathbb{E}}
\renewcommand{\P}{\mathbb{P}}
\DeclareMathOperator{\var}{Var}
\DeclareMathOperator{\Cov}{Cov}
\newtheoremstyle{upright}%
        {8pt plus2pt minus4pt}%
        {8pt plus2pt minus4pt}%
        {\upshape}%
        {}%
        {\bfseries}%
        {:}%
        {1em}%
        {}%
\theoremstyle{upright}
\newtheorem{remark}[theorem]{Remark}
\newcommand{\ignore}[1]{}
\begin{document}

\maketitle

\begin{abstract}
In this paper we analyze the appearance of a Hamilton cycle in the following random process.
The process starts with an empty graph on $n$ labeled vertices.
At each round we are presented with $K=K(n)$ edges, chosen uniformly at
random from the missing ones, and are asked to add one of them to the
current graph. The goal is to create a Hamilton cycle as soon as possible.

We show that this problem has three regimes, depending on the value of $K$.
For $K=o(\log n)$, the threshold for Hamiltonicity is
$\frac{1+o(1)}{2K}n\log n$, i.e., typically we can construct a Hamilton
cycle $K$ times faster that in the usual random graph process.
When $K=\omega(\log n)$ we can essentially waste almost no edges, and
create a Hamilton cycle in $n+o(n)$ rounds with high probability. Finally,
in the intermediate regime where $K=\Theta(\log n)$, the threshold has
order $n$ and we obtain upper and lower bounds that differ by a multiplicative factor of $3$.
\end{abstract}

\section{Introduction}\label{sec:intro}

The \emph{random graph process}, introduced by Erd\H{o}s and R\'{e}nyi
in their groundbreaking series of papers on random graphs around 1960,
begins with the edgeless graph on $n$
vertices, and at each point adds a single new edge to the current
graph. Each new edge is chosen uniformly and independently out of
all missing edges. Clearly, one may ``freeze'' the random graph
process at a given time-point $t=t(n) \in
\{0,\ldots,\binom{n}{2}\}$, yielding a random graph distributed as
the the well-known Erd\H{o}s-R\'{e}nyi random graph
$\mathcal{G}(n,m)$, which is in turn similar to the binomial
random graph $\mathcal{G}(n,p)$ with
$p=m/\binom{n}{2}$. These two latter models are defined as
follows: $\mathcal{G}(n,p)$ is a random graph on $n$ (labeled)
vertices, where every edge appears with probability $p$, independently of
the other edges, whereas $\mathcal{G}(n,m)$ is a graph uniformly chosen
out of all graphs with $n$ vertices and $m$ edges.  For more information
on these models and the correspondence between them see, e.g.,
\cite{RandomGraphs}, \cite{JLR}.

An important advantage in investigating the random graph process, rather than the two models $\mathcal{G}(n,p)$ and $\mathcal{G}(n,m)$, is that it allows a higher resolution analysis of the appearance of \emph{monotone graph properties} (a graph property is a set of graphs closed under isomorphism; it is monotone if it is also closed under the addition of edges).

A well-known example of this sort is the following \emph{hitting
time} result of Bollob{\'a}s and Thomason \cite{BT} (see also
\cite{RandomGraphs}*{Chapter 7}): the edge that touches the last isolated vertex in the random graph process is typically also the one that makes the graph connected. There are many additional examples for hitting-time results which link natural monotone properties to the minimal degree of the random graph process (cf., e.g., \cite{AKS}, \cite{BHKL}, \cite{Bollobas}, \cite{BolF}, \cite{ER66}, for such results on the appearance of disjoint Hamilton cycles, perfect matchings and the value of the isoperimetric constant).

The above mentioned examples indicate that the main obstacle for
the appearance of many natural graph properties is ``reaching''
the last low-degree vertices. It is therefore natural to ask how
the thresholds for these properties changes if we modify the
random graph process so that we can somehow bypass this obstacle.
The following model that achieves this was proposed by Achlioptas,
inspired by the celebrated ``power of two choices'' result of
\cite{ABKU}: at each step, we are presented with $K \geq 1$
different edges, chosen uniformly and independently out of all
missing edges, and are required to add one of them to our graph.
In this version of the process (which generalizes the
Erd\H{o}s-R\'{e}nyi graph process), one can attempt to either
accelerate the appearance of monotone graph properties, or delay
them, by applying an appropriate online algorithm. It is important
to stress that the process as described above is {\em online} in
nature: the algorithm is denied an ability to see any future edges
at the current round and is forced to make its choice based only on
its previous decisions. While the {\em
offline} versions of these problems are certainly of interest too,
here we restrict ourselves to the online setting.

Quite a few papers have thereafter attempted to settle the many open problems that rise in the above model. These include determining the minimum number of rounds required to ensure the emergence of a giant component, the longest period one may delay its appearance by, delaying the appearance of certain fixed subgraphs and so on (see, e.g., \cite{BF01}, \cite{BFW}, \cite{BK}, \cite{FGS}, \cite{KSS}, \cite{WS}).

In this work, we analyze the optimal thresholds in the Achlioptas process for the appearance of a Hamilton
cycle, a fundamental and thoroughly studied property in random graphs (see, e.g., \cite{RandomGraphs}*{Chapter 8} for further information). In order to obtain some immediate bounds for this problem, recall the well-known fact that the threshold for Hamiltonicity in the random graph $\mathcal{G}(n,m)$ is at $m=(1+o(1))\frac{1}{2}n\log n$ (to be precise, the threshold is at $m=\frac{1}{2}\left(\log n + \log\log n \pm \omega(1)\right)n$, where the $\omega(1)$-term tends to $\infty$ slower than $\log\log n$). Therefore, when presented with $K$ uniformly chosen edges at each round, the minimum number of rounds required for Hamiltonicity is typically between $\frac{1+o(1)}{2K}n\log n$ and $\frac{1+o(1)}{2}n\log n$ (the upper bound is obtained by always selecting the first edge out of the given $K$, whereas the lower bound is obtained by collecting all $K$ edges witnessed). Clearly, for $K \geq \frac{1}{2}\log n$, the above lower bound can be replaced by the trivial lower bound of $n$, as the Hamilton itself consists of $n$ edges.

Our results show that one can indeed construct a Hamilton cycle much faster than in the standard graph process (the above trivial upper bound), essentially matching the two lower bounds mentioned above. In order to formulate this statement precisely, we consider three regimes for the possible values of $K$, and study
the optimal threshold for Hamiltonicity in each of them.

In the first regime, consisting of the sub-logarithmic values of
$K$, we show that the above mentioned lower bound of
$\frac{1+o(1)}{2K}n \log n$ is tight. That is, for every $K =
o(\log n)$, there is an online algorithm that constructs a
Hamilton cycle roughly using the same amount of time it would take
for one to appear when collecting all $K$ edges of every round.
Putting it differently, for such values of $K$ the threshold for
Hamiltonicity is asymptotically $K$ times lower than that of the standard random graph process:
\begin{theorem}\label{thm-sublog}
 Let $K \geq 2$ satisfy $K = o(\log n)$, and consider the
Achlioptas process where $K$ uniformly chosen
new edges are presented at each round.
Then the minimum asymptotical number of rounds
needed for Hamiltonicity is \whps $\frac{1+o(1)}{2K}n \log n$.
\end{theorem}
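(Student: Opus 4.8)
I would prove the two bounds separately; the lower bound is immediate and essentially appears in the introduction, so the content lies in the upper bound. After $t$ rounds the union of all $Kt$ edges ever presented to the algorithm is a uniformly random set of (whp distinct) $Kt$ edges, i.e.\ in effect a copy of $\mathcal{G}(n,Kt)$, and the graph actually constructed is a spanning subgraph of it. If $t\le(1-\delta)\frac{n\log n}{2K}$ for a fixed $\delta>0$ then $Kt<\tfrac12 n(\log n-\omega(1))$, so \whps $\mathcal{G}(n,Kt)$, and hence every subgraph of it, contains an isolated vertex and is not Hamiltonian; thus \whps at least $(1-o(1))\frac{n\log n}{2K}$ rounds are needed. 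For the matching upper bound, the plan is to fix a slowly growing $f=f(n)\to\infty$ with $f=o(\log n)$, put $t=\lceil\frac{n}{2K}(\log n+f)\rceil=(1+o(1))\frac{n\log n}{2K}$, and run a greedy online algorithm that \whps builds a Hamilton cycle within $t$ rounds. At each round the algorithm maintains the set $W$ of \emph{deficient} vertices (those of current degree below $3$) and, given presented edges $e_1,\dots,e_K$, adds one with exactly one endpoint in $W$ if available (preferring the endpoint of smallest degree, breaking further ties at random); failing that, one with both endpoints in $W$; and failing that — when no presented edge meets $W$ — it simply adds $e_1$, which we call a \emph{free} edge.

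The first and principal step is to show that \whps $\delta(G)\ge 3$ after $t$ rounds. For a fixed vertex $v$, the probability that it lies on one of the $K$ presented edges in a given round is $(1+o(1))\frac{2K}{n}$, uniformly in the history, so the number of rounds (out of $t$) in which $v$ is presented stochastically dominates a binomial variable of mean $(1+o(1))(\log n+f)$. By a moderate-deviation estimate and a union bound over $v$, choosing $f$ and an auxiliary threshold $g'$ to be suitably slowly growing $o(\log n)$ functions of $\log n/K$, \whps every vertex is presented at least $g'$ times, and one can arrange $g'=\omega(K)$. Next, a presentation of $v$ raises its degree unless the algorithm services a competing deficient vertex in that round, which occurs only when at least two deficient vertices are presented together — an event of probability $O(K|W|/n)$ given that $v$ is one of them. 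Tracking the coupon-collector-type decay of $|W|$ over the $t$ rounds, the expected number of rounds wasting a presentation of $v$ is $O(K)$; since $O(K)=o(g')$, a tail estimate gives that \whps no vertex is wasted more than $g'-3$ times, so every vertex is serviced at least $3$ times and $\delta(G)\ge 3$ holds \whps. (One takes an edge with both endpoints in $W$ only when no edge with exactly one endpoint in $W$ is presented, which keeps the graph induced on $W$ sparse and hence harmless for the next step.)

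The second step is to verify that \whps $G$ is moreover a good expander: for a suitable small constant $\rho>0$, every $S$ with $|S|\le\rho n$ satisfies $|N_G(S)\setminus S|\ge 2|S|$, $G$ is connected, and any two disjoint sets of size $\rho n$ span an edge. This holds because all but $O(n)=o(t)$ of the rounds add a free edge — essentially a uniformly random edge among the non-deficient vertices — so $G$ contains a random graph on $\ge n-|W|$ vertices of average degree $(1+o(1))\frac{\log n}{K}\to\infty$, for which these properties follow from routine first-moment bounds, while the servicing edges only help since, conditioned on the deficient endpoint they cover, their other endpoint is (nearly) uniform. Finally, any graph with $\delta\ge 3$ satisfying these three properties is Hamiltonian by the classical P\'osa rotation–extension argument: connectivity and small-set $2$-expansion force a longest path on $(1-o(1))n$ vertices with $\Theta(n)$ possible endpoints on each side after rotations, and an edge between any two $\rho n$-sets then closes a Hamilton cycle or lengthens the path, a contradiction. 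Hence \whps $t=(1+o(1))\frac{n\log n}{2K}$ rounds suffice, which together with the lower bound proves the theorem.

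The hard part will be the degree step, and within it the accounting forced by the online constraint — the algorithm sees only the current batch of $K$ edges and must occasionally leave a deficient vertex unserviced — together with the calibration of $f$ and $g'$ so that they dominate these fluctuations yet stay $o(\log n)$; this calibration becomes more delicate as $K$ approaches $\log n$. By comparison the pseudorandomness needed for the expansion step is soft, relying only on the fact that the algorithm does not disturb the ``far'' endpoints of the edges it adds.
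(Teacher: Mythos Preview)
Your proof has a real gap in the last step. You assert that $\delta\ge3$, connectivity, $2$-expansion of sets of size at most $\rho n$, and ``an edge between any two disjoint $\rho n$-sets'' together force Hamiltonicity by the classical P\'osa rotation--extension argument. But P\'osa gives you a set $R$ of $\ge\rho n$ endpoints and then, for each $y\in R$, a set $Y_y$ of $\ge\rho n$ second endpoints; the pairs whose presence would close the current path into a cycle are $\{(y,z):y\in R,\ z\in Y_y\}$, and these do \emph{not} form a product $R\times B$ for any single set $B$. The hypothesis ``every two $\rho n$-sets span an edge'' therefore does not guarantee that one of these boosters is already present in $G$. Deterministic Hamiltonicity criteria with this shape (e.g.\ Hefetz--Krivelevich--Szab\'o) require an expansion factor well above $2$, which the degree-$3$ vertices that stayed deficient longest cannot provide. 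The paper avoids the issue entirely: its Phase~3 spends $O(n)$ \emph{extra} rounds after the expander is in place, and at each step a fresh batch of $K$ edges hits one of the $\Omega(n^2)$ current boosters with probability $\Omega(1)$, so $O(n)$ rounds cover the at most $n$ extensions. Since $O(n)=o\bigl(\frac{n\log n}{2K}\bigr)$ when $K=o(\log n)$, you should simply do the same.

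Your architecture is also genuinely different from the paper's, and the difference matters for the earlier steps. The paper does not push all $n$ vertices to a fixed minimum degree. It first makes $O(n)$ purely random choices to obtain a $D$-core $H$ on $(1-\epsilon)n$ vertices (a clean $\mathcal G(n,p)$ subgraph, hence an expander by routine first-moment estimates); then it spends the main $\approx\frac{n}{2K}\log n$ rounds giving only the $\epsilon n$ leftover vertices degree $20$ into $H$, with the $H$-endpoints exactly uniform; and only then does it sprinkle boosters. In your unified greedy the ``free'' edges are conditioned to avoid a time-varying set $W_s$, and the late-deficient vertices carry only their three servicing edges, so neither the expansion step nor the waste-accounting is as routine as you suggest: the former is not a clean random-graph computation on a fixed vertex set, and the latter needs a genuine concentration argument (the wasted presentations of a fixed $v$ are not independent indicators, and an $O(K)$ expectation does not by itself give a uniform tail over all $n$ vertices). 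These points look repairable, but the paper's phase separation is precisely what lets it sidestep them.
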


In the second regime, consisting of the super-logarithmic values
of $K$ (that is, $K=\omega(\log n)$), we show that $n+o(n)$ rounds
suffice for constructing a Hamilton cycle. In other words, for
such values of $K$ it is possible to achieve Hamiltonicity
essentially without waste (the selected edges of almost all rounds
participate in the Hamilton cycle):
\begin{theorem}\label{thm-suplog}
 Let $K$ satisfy $K = \omega( \log n)$, and consider the
Achlioptas process where $K$ uniformly chosen
new edges are presented at each round.
The minimum asymptotical number of rounds
needed for Hamiltonicity is \whps $n+o(n)$.
\end{theorem}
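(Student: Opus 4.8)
The plan is to show, for every fixed $\epsilon>0$, that \whp a Hamilton cycle is built within $(1+\epsilon)n$ rounds (a routine argument then pushes $\epsilon=\epsilon(n)\to0$); together with the trivial lower bound $n$ (a Hamilton cycle has $n$ edges and each round adds at most one), this gives the theorem. Throughout, only $O(n)$ edges are ever added, so at every round the missing edges number $(1-o(1))\binom n2$ and a uniformly chosen missing edge is, up to a $1\pm o(1)$ factor, a uniformly random pair of vertices. First I would dispose of the case $K\ge n\log n$: there the crude strategy --- at each round merge two of the current vertex-disjoint paths if an offered edge joins their endpoints, and otherwise add an arbitrary offered edge --- already wins in $n+O(n^2/K)=n+O(n/\log n)=n+o(n)$ rounds, since with $p$ paths present there are $\Omega(p^2)$ merging non-edges, so a merge costs $O(\max\{1,n^2/(Kp^2)\})$ rounds in expectation and $\sum_p \max\{1,n^2/(Kp^2)\}=n+O(n^2/K)$. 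From now on assume $K<n\log n$ (and $K=\omega(\log n)$); then $p^*:=\lceil C n\sqrt{\log n/K}\,\rceil$, with $C$ a large constant to be fixed, satisfies $p^*=o(n)$, $(p^*)^2=\omega(n)$, and $\log p^*=\Theta(\log n)$.

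\emph{Phase I (greedy path packing, without waste).} Maintain a spanning collection of vertex-disjoint paths, isolated vertices counting as trivial paths, starting from the $n$ singletons; at each round use an offered edge joining the endpoints of two distinct current paths if one exists, and otherwise add an arbitrary offered edge. With $p$ paths present there are at least $\binom p2-O(n)=\Omega(p^2)$ ``good'' (path-merging) non-edges (using $(p^*)^2=\omega(n)$), so the probability that a given round is wasted is at most $\big(1-\Omega(p^2/n^2)\big)^K\le\exp\!\big(-\Omega(Kp^2/n^2)\big)$, which is at most $n^{-10}$ as long as $p\ge p^*$, once $C$ is large enough. A union bound over the at most $n$ rounds shows that \whp no round is wasted until the number of paths drops to $p^*$, so Phase I ends after exactly $n-p^*=n-o(n)$ rounds holding a spanning union of $p^*$ disjoint paths $P_1,\dots,P_{p^*}$.

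\emph{Phase II (finishing on the contracted structure).} Contract each $P_i$ to a super-vertex $x_i$: a Hamilton cycle of $G$ through all of $P_1,\dots,P_{p^*}$ is precisely a Hamilton cycle of a graph on the $p^*$ super-vertices in which the two edges at each $x_i$ reach the two different endpoints of $P_i$. An offered $G$-edge is now \emph{useful} iff it joins an endpoint of some $P_i$ to an endpoint of some $P_j$ with $i\ne j$; with $q$ super-paths present there are $\Omega(q^2)$ such non-edges, so the expected number of useful offered edges in a round is $\Theta(Kq^2/n^2)$, which at $q=p^*$ equals $\Theta(\log n)=\Theta(\log p^*)$. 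Thus the induced online process on the $p^*$ super-vertices falls (at worst) into the intermediate regime $K=\Theta(\log n)$, and I would invoke its machinery: path-packing on the super-vertices down to $\Theta(p^*)$ super-paths with no further waste, then a P\'osa rotation--extension endgame in which each newly bought edge either enables a rotation or performs an extension --- the accumulated ``spare'' edges supplying enough expansion of the contracted structure that one round of rotations exposes $\Omega(p^*)$ alternative endpoints at once --- and finally the closing of the cycle, all within $O(p^*)$ rounds \whp. The endpoint-parity requirement at each $x_i$ costs essentially nothing, since roughly half the candidate edges at $x_i$ realize each of its two ports. Recombining, the number of rounds is $n-p^*+O(p^*)=n+o(n)$.

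I expect \emph{Phase II to be the main obstacle}. Merging the last $\Theta(p^*)$ paths one pair at a time fails badly: with $q$ paths left a merging edge is offered with probability only $\Theta(Kq^2/n^2)$ per round, so this would cost $\sum_q\Theta(n^2/(Kq^2))=\Theta(n^2/K)$ extra rounds --- which is $\omega(n)$ once $K=o(n)$ and ruins the bound. Escaping this is exactly the point of the contraction-plus-rotation-extension scheme (equivalently, of the intermediate-regime argument applied to the $p^*$ super-vertices): the $o(n)$ surplus of edges is spent to give the contracted graph enough expansion that a single round of rotations yields $\Omega(p^*)$ usable endpoints, so that extensions and the final closure are cheap. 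The delicate points will be (i) verifying that a path together with the adaptively chosen, sparse spare edges really does expand well enough for P\'osa rotations, and (ii) the bookkeeping of endpoint parities so that the Hamilton cycle found on the super-vertices lifts back to one in $G$.
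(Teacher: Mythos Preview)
Your Phase I is correct and actually a bit cleaner than the paper's analogous step. The trouble is Phase II, and it is a genuine gap rather than a routine detail.

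You propose to contract the $p^*$ paths to super-vertices and then invoke the intermediate-regime machinery of Theorem \ref{thm-log} on the $p^*$-vertex super-graph. But the intermediate-regime proof in this paper works by building a random $3$-out graph on the vertex set and citing Bohman--Frieze to conclude Hamiltonicity. That yields only an ordinary Hamilton cycle on the super-vertices, which does \emph{not} lift to $G$: lifting requires that at every super-vertex the two incident cycle edges use distinct ports, i.e., a Hamilton cycle on the $2p^*$ endpoints passing through the fixed perfect matching given by the paths. This is a genuinely different problem from plain Hamiltonicity of a random $3$-out graph, and your remark that ``roughly half the candidate edges realise each port'' is a local observation that does nothing for the global simultaneous constraint at all $p^*$ super-vertices. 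Your alternative sketch via P\'osa rotations on the super-path has the same difficulty: if the free port is $b_k$ and an offered edge goes from $b_k$ to a port $b_i$ (the one currently linked to $a_{i+1}$), a standard rotation works, but an edge from $b_k$ to $a_i$ would, under the usual rotation rule, cut inside the original path $P_i$ rather than at an inter-path edge. A port-aware version of Lemma \ref{lem-Posa} / Corollary \ref{cor-Posa2} would be needed and you have not supplied one. Separately, you appeal to ``accumulated spare edges'' for expansion, but Phase I wasted no rounds, so there are no spare edges at the start of Phase II; you would have to budget part of the $O(p^*)$ rounds to build that expansion, and the phrase ``path-packing on the super-vertices down to $\Theta(p^*)$ super-paths'' is vacuous since you already start with $p^*$ of them.

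For comparison, the paper takes a rather different route that avoids contraction entirely. It deliberately leaves a set $Y^c$ of $m=n/h^2$ vertices \emph{uncovered} by the long paths, spends $O(m)=o(n)$ rounds turning a large subset $X\subset Y^c$ into a small-diameter expander that retains this property under deletion of any $o(m)$ vertices, and then uses short paths inside $X$ to splice the $L\le n/(h^4\log n)$ long paths end-to-end into a single cycle on $Y$. The remaining vertices in $Y^c$ are separately made into a Hamilton cycle by invoking the sub-logarithmic algorithm of Section \ref{sec:sublog} (not the intermediate regime), and the two cycles are patched. Because every gluing step uses actual vertices and edges of $G$, no lifting or port bookkeeping is ever needed; the cost of this is the extra idea of reserving the uncovered set $Y^c$ in advance.
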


In the intermediate regime, where $K$ has order $\log n$, the
methods we used in order to prove Theorems \ref{thm-sublog} and
\ref{thm-suplog} show that the threshold for Hamiltonicity has
order $n$, and we obtain lower bounds and upper bounds that differ by a multiplicative factor of $3$. This is incorporated in the following theorem.

\begin{theorem}\label{thm-log}
Consider the Achlioptas process where $K$ uniformly chosen
new edges are presented at each round, and $K\to\infty$ with $n$.
Then $\tau_H$, the minimum number of rounds
needed for Hamiltonicity, is at least $(1+o(1))(1 + \frac{\log n}{2K})n$
and at most $(1+o(1))(3+\frac{\log n}{K})n$ \whp. In particular, if $K= \gamma \log n$
for some fixed $\gamma > 0$, then $1 + \frac{1}{2\gamma}+ o(1) \leq \frac{\tau_H}{n} \leq 3 + \frac{1}{\gamma} + o(1)$ \whp.
\end{theorem}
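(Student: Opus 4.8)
\emph{Proof proposal.}

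\emph{Lower bound.} Since exactly one edge is added per round and a Hamilton cycle has $n$ edges, $\tau_H\ge n$; the plan is to recover the additional $(1-o(1))\frac{n\log n}{2K}$ rounds from a ``coupon collecting with waste'' estimate. Fix any online strategy, and let $S_j$ be the set of vertices of degree at most $1$ after round $j$, so $S_0=V$, the sequence $(|S_j|)_{j\ge0}$ is non-increasing, and $S_{\tau_H}=\emptyset$ because a Hamiltonian graph has minimum degree $\ge2$. Call round $j+1$ \emph{blocked} if each of its $K$ presented edges has both endpoints outside $S_j$; then the chosen edge of a blocked round has both endpoints of degree $\ge2$, so it increases the surplus potential $\Phi_j\deq\sum_v(\deg_j(v)-2)_+$ by exactly $2$. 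As $\Phi$ is non-decreasing with $\Phi_0=0$ and $\Phi_{\tau_H}=2\tau_H-2n$, this yields $\tau_H-n=\tfrac12\Phi_{\tau_H}\ge(\text{number of blocked rounds})$, so it suffices to prove that \whp at least $(1-o(1))\frac{n\log n}{2K}$ rounds are blocked, \emph{whatever the strategy}. The key point is that no strategy can drive $|S_j|$ down to $0$ quickly: while $|S_j|=s$, the chosen edge lowers $|S_j|$ only if it meets one of the at most $s$ vertices of degree exactly $1$, an event of probability at most $(1+o(1))\frac{2sK}{n}$, whereas the round is blocked with probability $\big(1-\tfrac{2s}{n}\big)^{K}(1+o(1))=e^{-2sK/n}(1+o(1))$. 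Hence in expectation the process spends at least $(1-o(1))\frac{n}{2sK}$ rounds with $|S_j|=s$, an $e^{-2sK/n}(1-o(1))$ fraction of which are blocked; summing $e^{-2sK/n}\,\frac{n}{2sK}$ over $s\ge1$ gives the harmonic sum $\sim\frac{n}{2K}\sum_{s\le n/(2K)}\frac1s\sim\frac{n\log n}{2K}$. (This term matters only when $K=O(\log n)$ — otherwise the bound is simply $\tau_H\ge n$ — and then $\log K=o(\log n)$, so the leading constant $\tfrac12$ is correct.) A routine concentration argument then upgrades the expectation to a \whp lower bound.

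\emph{Upper bound.} I would run an explicit algorithm in three phases. \emph{Phase 1} (covering), of length $(1+o(1))\frac{n\log n}{K}+O(n)$: while some vertex $v$ has degree below a fixed constant $d_0$, pick a presented edge incident to $v$ whenever one is offered, preferring one that also joins two components; the analysis above, read now as an upper bound on the greedy runtime, shows that \whp this produces within the stated budget a connected graph of minimum degree $\ge d_0$ (the coupon collecting to reach minimum degree $d_0$ costs the $\frac{n\log n}{K}$ term, everything else costs only $O(n)$). \emph{Phase 2} (expansion), of length $O(n)$: add $O(n)$ further edges — always choosing, when possible, one that improves the expansion of a currently poorly-expanding set — until the graph has the expansion property needed for the rotation--extension method (e.g.\ $|N(U)\setminus U|\ge2|U|$ for all $|U|\le n/2$); linearly many edges suffice. \emph{Phase 3} (boosting), of length $O(n)$: by P\'osa's lemma a connected expander that is not yet Hamiltonian has $\Omega(n^2)$ \emph{booster} edges, so \whp at least one of the $K$ presented edges is a booster; adding it lengthens a longest path by at least one, or closes a Hamilton cycle, so $O(n)$ such rounds finish the job. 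In all, the algorithm uses at most $(1+o(1))\big(3+\frac{\log n}{K}\big)n$ rounds, with the constant $3$ a crude bound on the linear overhead; recovering the truth ($1$ in place of $3$, and $\frac{\log n}{2K}$ in place of $\frac{\log n}{K}$) would require a less wasteful construction, and the discrepancy between the two estimates is precisely this factor $3$.

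\emph{Main obstacle.} The delicate step is the lower bound — specifically, making the ``$|S_j|$ decays slowly'' heuristic rigorous for an \emph{arbitrary} (adversarial) strategy. One needs that levels $s\lesssim n/K$ are essentially never skipped — a decrease of $|S_j|$ by $2$ in one round requires a presented edge with both endpoints in $S_j$, which has probability $O(1/K)=o(1)$ in that range — so that the dwell times at the distinct levels genuinely sum to the harmonic series with the sharp leading constant $\tfrac12$; and one must establish concentration for this sum of weakly dependent, geometric-type dwell times. By comparison, the Phase~3 booster argument is by now standard (modulo choosing the expansion notion so that P\'osa's lemma applies), and in Phases~1--2 the only real care needed is to keep the various $O(n)$ and coupon-collector error terms within budget.
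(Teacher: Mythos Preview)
Your upper bound approach has a genuine gap: it cannot yield the constant $3$. The P\'osa rotation--extension machinery needs a global expansion property (every set of size up to $cn$ has at least twice as many neighbours), and building such an expander greedily costs far more than $n$ rounds in the linear term --- in the paper's own sub-logarithmic argument (Section~\ref{sec:sublog}) the expander construction alone uses $n/\epsilon>100\,n$ rounds, and the final booster phase uses another $\approx 2n$. So your three-phase scheme gives $Cn+(1+o(1))\frac{n\log n}{K}$ with $C$ in the dozens, not $3$. The paper obtains the constant $3$ by a completely different route: Lemma~\ref{cor-rand-d-out} shows that a random $d$-out graph can be produced in $(1+o(1))(d+\tfrac{\log n}{K})n$ rounds, and then the theorem of Bohman and Frieze that a random $3$-out graph is \whps Hamiltonian is invoked with $d=3$. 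Without that black box, $3$ is out of reach.

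Your lower bound is in a workable direction but is substantially more intricate than what is needed, and the ``main obstacle'' you flag (controlling the adversary's trajectory of $|S_j|$ well enough to get the sharp harmonic sum, with concentration) is real. The paper sidesteps the adversary entirely with a two-line argument: after $(1-\epsilon)n$ rounds, a straight degree count forces at least $\epsilon n$ vertices of degree $<2$; then over the next $(1-\epsilon)\frac{n}{2K}\log n$ rounds one ignores the algorithm and applies a second-moment calculation on the \emph{presented} edges (all $K$ per round) to show that $n^{\epsilon/2}$ of those vertices are never touched at all. Since an untouched vertex keeps degree $<2$ regardless of the strategy, the graph is not Hamiltonian at time $(1-\epsilon)(1+\tfrac{\log n}{2K})n$. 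This decouples the counting part (giving the $n$) from the coupon-collector part (giving the $\frac{n\log n}{2K}$) without any need to track dwell times, blocked rounds, or level-skipping.
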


Note that Theorems \ref{thm-sublog},\ref{thm-suplog} and \ref{thm-log} imply a certain discontinuity in the behavior of the ratio between the threshold for Hamiltonicity in the standard random graph process and the corresponding threshold in the Achlioptas process. Indeed, Theorem \ref{thm-sublog} asserts that whenever $K = o(\log n)$, this ratio is asymptotically $K$, whereas Theorem \ref{thm-suplog} implies that for $K=\omega(\log n)$ this ratio is roughly $\frac{1}{2}\log n$. However, for $K=\frac{1}{2}\log n$ for instance, where one might expect this ratio to be roughly $\frac{1}{2}\log n$, Theorem \ref{thm-log} shows that it is in fact roughly between $\frac{1}{10}\log n$ and $\frac{1}{4}\log n$.

The rest of this paper is organized as follows. Sections \ref{sec:sublog},
\ref{sec:superlog} and \ref{sec:intermediate} contain the proofs
of Theorem \ref{thm-sublog} (sub-logarithmic regime), Theorem
\ref{thm-suplog} (super-logarithmic regime) and Theorem
\ref{thm-log} (intermediate regime), respectively. The final section,
Section \ref{sec:conclusion}, is devoted to concluding remarks and
open problems.

Throughout the paper, all logarithms are in the natural basis, and
an event, defined for every $n$, is said to occur {\em with high
probability} (\whp) or {\em almost surely} if its probability tends to $1$
as $n\to\infty$. For a given graph $G$ and a subset of its vertices $S \subset V(G)$, let $S^c \deq V(G) \setminus S$ denote the complement set. Further note that for the sake of simplicity,
our arguments will occasionally move between the random graph models
$\mathcal{G}(n,p)$ and $\mathcal{G}(n,m)$ (for more information on the connection between these models see, e.g., \cite{RandomGraphs}).

\section[Sub-logarithmic regime]{Sub-logarithmic regime: $K=o(\log n)$}\label{sec:sublog}
In this section we prove Theorem \ref{thm-sublog}, that states that for the Achlioptas process with $K=o(\log n)$ edges in each round, the minimal number of rounds required for Hamiltonicity is $\frac{1+o(1)}{2K}n\log n$ \whp.

The lower bound on the asymptotical optimal number of rounds required to obtain a Hamilton cycle
is immediate from the well-known fact that the threshold for Hamiltonicity in $\mathcal{G}(n,m)$ is
$m=\frac{1+o(1)}{2}n\log n$ (see, e.g., \cite{RandomGraphs}*{Chapter 2}). Thus, even if one were allowed to collect
all $K$ edges presented at every round, the minimum asymptotical number of rounds for
Hamiltonicity would still be $\frac{1+o(1)}{2K}n\log n$. It remains to show that this number of rounds is
sufficient to create a Hamilton cycle.

To this end, fix $0 < \epsilon < \frac{1}{100}$ and apply the following algorithm:
\begin{enumerate}[(1)]
  \item \label{alg-phase-1}Construct an expander $H=(V_H,E_H)$ on at least $(1-\epsilon)n$ vertices, in which every nonempty set $S\subset V_H$ of size at most $\frac{n}{100}$ has at least $3|S|$ neighbors in $V_H\setminus S$.

   Cost: $n/\epsilon$ rounds at the most.

  \item \label{alg-phase-2} Build a bipartite expander with parts $V_H,V_H^c$ in which every set $S \subset V_H^c$ of size at most $\frac{n}{100}$ has at least $8|S|$ neighbors in $V_H$.

  Cost: $(1+3\epsilon)\frac{n}{2K} \log n$ rounds at the most.
  \item \label{alg-phase-3} Repeatedly apply the P\'osa rotation-extension technique in order to construct a Hamilton cycle.

   Cost: $2(1+\frac{10^4}{K})n$ rounds at the most.
\end{enumerate}
In what follows, we elaborate on each of the three phases of the algorithm, and show that at the end of Phase \ref{alg-phase-3} the graph contains a Hamilton cycle \whp.

\subsection{Constructing an expander $H$}\label{sec:core}
A \emph{$k$-core} of a graph is its maximum induced subgraph with
minimum degree at least $k$. It is well-known and easy to show that this subgraph is unique, and can be obtained be repeatedly deleting any vertex of degree smaller than $k$ (in any arbitrarily chosen order). In \cite{PSW}, the authors analyze the thresholds for the appearance of a $k$-core in the random graph, as well as its typical size, for any fixed $k \geq 3$.
A simple bound on these quantities will suffice for our purposes, as given in the next lemma:
\begin{lemma}\label{lem-d-core}
Let $D \geq 100$ be an integer, set $p = \frac{3D}{2n}$ and consider the random graph $G \sim \mathcal{G}(n,p)$.
Then the $D$-core of $G$ contains at least $\left(1-\frac{1}{D}\right)n$ vertices \whp.
\end{lemma}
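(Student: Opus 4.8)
The plan is to work with the standard description of the $D$-core as the output of the \emph{peeling process}: repeatedly delete a vertex whose current degree is less than $D$, in an arbitrary order, until no such vertex remains. Let $R$ denote the (well-defined) set of vertices deleted, so that the $D$-core is $V(G)\setminus R$; it suffices to show that whp $|R|<n/D$. We may treat $D$ as a fixed integer (the argument works verbatim as long as $D=o(n)$), so that $t:=\lceil n/D\rceil\to\infty$ as $n\to\infty$.

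The first, purely combinatorial, step records what it costs to delete a set of vertices. Suppose $|R|\ge t$ and list the first $t$ deleted vertices as $v_1,\dots,v_t$ in order of deletion; put $S=\{v_1,\dots,v_t\}$. At the moment $v_i$ is deleted, the surviving vertex set is exactly $\{v_i,v_{i+1},\dots,v_t\}\cup S^c$, and the degree of $v_i$ in the current graph is at most $D-1$; hence $v_i$ has at most $D-1$ neighbours in $\{v_{i+1},\dots,v_t\}\cup S^c$. Summing over $i=1,\dots,t$ counts every edge inside $S$ once and every edge between $S$ and $S^c$ once, so the number of edges of $G$ with at least one endpoint in $S$ is at most $(D-1)t<Dt$. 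Therefore, if whp \emph{every} $t$-subset of $V(G)$ has at least $Dt$ incident edges, then whp $|R|<t\le n/D+1$, giving a $D$-core of size $\ge n-\lceil n/D\rceil+1>(1-\tfrac1D)n$.

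It remains to establish this last event by a first-moment bound. Fix $S$ with $|S|=t$; the number of edges of $G$ incident to $S$ is distributed as $\mathrm{Bin}(N,p)$ with $N=\binom{t}{2}+t(n-t)\ge t(n-t)\ge(1-\tfrac2D)tn$ for $n$ large. Hence its expectation is at least $(1-\tfrac2D)tn\cdot\tfrac{3D}{2n}=\tfrac32(D-2)t$, which for $D\ge100$ exceeds the threshold $Dt$ by a factor at least $\tfrac32\cdot\tfrac{98}{100}>1.47$. By the Chernoff bound for the lower tail of a binomial, the probability that this count is below $Dt$ is at most $\exp(-c(D-2)t)$ for an absolute constant $c>0$ (one may take $c=\tfrac1{20}$). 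Union-bounding over the $\binom{n}{t}\le(en/t)^t\le(eD)^t$ choices of $S$ gives a failure probability at most
\[
\exp\!\big(t\,(1+\log D-c(D-2))\big),
\]
and since $1+\log D-\tfrac1{20}(D-2)<0$ for every integer $D\ge100$, this is at most $e^{-t}\to0$. Combining with the previous paragraph completes the proof.

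The only genuinely conceptual point is the peeling bound of the middle paragraph — choosing the \emph{first} $t$ deleted vertices and counting only ``forward-or-outward'' edges so that each relevant edge is charged exactly once. After that there is ample slack: the choice $p=\tfrac{3D}{2n}$ is precisely what makes the expected number of edges incident to a $t$-set a constant factor ($\approx\tfrac32$) above the threshold $Dt$, so the Chernoff-plus-union-bound estimate goes through comfortably for all $D\ge100$, and no delicate optimization of constants is needed.
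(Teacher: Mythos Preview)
Your argument follows essentially the same route as the paper: use the peeling description of the $D$-core, extract a set $S$ of the first $\approx n/D$ deleted vertices, observe that $S$ must have too few incident edges, and then rule this out for all such $S$ via a binomial tail bound plus a union bound. The only cosmetic difference is that the paper counts just the cut edges $\partial S$ (each deleted vertex has at most $D-1$ neighbours in $S^c$, so $|\partial S|\le (D-1)n/D$), whereas you count all edges touching $S$; both versions work equally well.

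There is, however, a numerical slip in your final step. With $c=\tfrac{1}{20}$ and $D=100$ one gets
\[
1+\log D - c(D-2)\;=\;1+\log 100 - 4.9\;\approx\;0.7\;>\;0,
\]
so the exponent in your union bound is \emph{positive} and the probability does not tend to zero. The Chernoff bound you invoke actually delivers a stronger exponent than $(D-2)/20$: with $\mu\ge\tfrac{3}{2}(D-2)t$ and threshold $Dt$ the relative gap is $\delta=\frac{D-6}{3(D-2)}$, giving
\[
\frac{\delta^2\mu}{2}\;\ge\;\frac{(D-6)^2}{12(D-2)}\,t,
\]
which at $D=100$ is about $7.5t$. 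Any constant in the range $0.058<c<0.076$ (for instance $c=\tfrac{1}{15}$) makes both the Chernoff estimate $\exp(-c(D-2)t)$ and the inequality $1+\log D<c(D-2)$ valid for every integer $D\ge100$, and with that correction your proof goes through unchanged.
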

\begin{proof}
We claim that it is enough to show that
 \begin{equation}\label{eq-sparse-cuts-bound}
 |\partial S| \geq n~\mbox{ for any subset $S \subset V(G)$ of size $|S| = n/D$}~,
 \end{equation}
where $\partial S$ denotes the set of all edges that have precisely one endpoint in $S$.
 Indeed, assuming that the $D$-core of $G$ contains less than
 $\left(1-\frac{1}{D}\right)n$ vertices, we may pause the process
 of uncovering the $D$-core after precisely $n/D$ vertices have been deleted. At this point, let $S$ denote the set of deleted vertices. Clearly, each vertex of $S$ has at most $D-1$ neighbors in $V(G)\setminus S$, hence
$|\partial S| \leq (D-1)n/D$ in contradiction to \eqref{eq-sparse-cuts-bound}.

To prove \eqref{eq-sparse-cuts-bound}, fix a set $S$ of
cardinality $n/D$, and recall that $|\partial S|$ is binomially
distributed with parameters $M$ and $p$, where $M =
\frac{D-1}{D^2}n^2$. Therefore, as $\E\left[|\partial S|\right] =
\frac{3(D-1)}{2D}n > n $, the
monotonicity of the binomial distribution between $0$ and its expectation implies that
\begin{align*}
\P(|\partial S|\leq n) &\leq (n+1)\P(|\partial S| = n) = (n+1) \binom{M}{n}p^n (1-p)^{M-n}\leq
(n+1)\left(\frac{\mathrm{e}M}{n}\right)^n p^n \mathrm{e}^{-(1-o(1))pM}\\&\leq
\left(\frac{3(D-1)\mathrm{e}}{2D}\right)^n \mathrm{e}^{-(1-o(1))\frac{3(D-1)}{2D}n} \leq
\mathrm{e}^{-n/14}~,
\end{align*}
where the last inequality holds for any sufficiently large $n$ by our assumption that $D \geq 100$ (with room to spare). Since this assumption on $D$ also implies that the number of possible choices for the set $S$ is
$\binom{n}{n/D} \leq \left(\mathrm{e}D\right)^{n/D} \leq \exp(n/15)$, we conclude that \eqref{eq-sparse-cuts-bound} holds with high probability.
\end{proof}
Recalling that $\epsilon < \frac{1}{100}$, set $D = \lceil
1/\epsilon \rceil$. By the above lemma, performing random
selections in the Achlioptas process for $\frac{3}{4}Dn <
n/\epsilon$ rounds already produces a $D$-core of size at least
$(1-\epsilon)n$ vertices with high probability. Condition on this
event, and throughout the proof let $H$ denote this $D$-core. The
vertex expansion of sets in the induced subgraph on $H$ follows
from basic properties of the random graph, stated in the following simple
lemma.
\begin{lemma}\label{lem-small-avg-deg} Let $k \geq 100$ be a constant, set $p=\frac{3k}{2n}$ and consider the random graph $G \sim \mathcal{G}(n,p)$. Then \whps every induced subgraph on at most $n/25$ vertices of $G$ has average degree at most $k/4$.
\end{lemma}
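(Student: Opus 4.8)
The plan is to bound, via a union bound, the probability that there exists a ``dense'' small set, i.e. a set $T$ of $t \le n/25$ vertices spanning more than $\frac14 k \cdot \frac t2 = \frac{kt}{8}$ edges (which is exactly the condition ``average degree exceeds $k/4$'', since a set of $t$ vertices with average degree $d$ spans $dt/2$ edges). For a fixed set $T$ of size $t$, the number of edges inside $T$ is binomially distributed with parameters $\binom t2$ and $p = \frac{3k}{2n}$, so its expectation is $\binom t2 p \le \frac{t^2}{2}\cdot\frac{3k}{2n} = \frac{3kt^2}{4n} \le \frac{3kt}{100}$ using $t \le n/25$. Since the target $\frac{kt}{8}$ comfortably exceeds this expectation (by a constant factor bounded away from $1$), standard binomial tail estimates give $\P\big(\mathrm{Bin}(\binom t2,p) \ge \tfrac{kt}{8}\big) \le \big(\frac{e\binom t2 p}{kt/8}\big)^{kt/8} \le \big(\frac{e \cdot 3kt/(4n)\cdot 8}{kt}\big)^{kt/8} = \big(\frac{6e}{n/t}\big)^{kt/8}$, i.e. at most $(6e t / n)^{kt/8}$.

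Next I would multiply by the number of choices of $T$, namely $\binom nt \le (en/t)^t$, and sum over $1 \le t \le n/25$. The resulting summand is at most $\big[(en/t)\cdot (6et/n)^{k/8}\big]^t$. Since $k \ge 100$, the exponent $k/8 \ge 12$ is large, so the factor $(6et/n)^{k/8}$ overwhelms the $(en/t)$ factor whenever $t/n$ is bounded away from a suitable constant — and $t/n \le 1/25$ makes $6et/n \le 6e/25 < 1$, giving $(6et/n)^{k/8}$ exponentially small while $(en/t) \le e\cdot 25 n$ at worst is only polynomial; more carefully one checks the whole bracket is at most, say, $1/2$ for all $t$ in the range, so the sum over $t$ is $O(1)$ times the largest term and tends to $0$. (The genuinely smallest sets, $t = O(1)$, need a momentary separate glance: there the bracket is $(en/t)(6et/n)^{k/8}$ which is still $n^{1 - k/8 + o(1)} = o(1)$ since $k/8 > 1$.)

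The only mildly delicate point — the ``main obstacle,'' though it is quite mild — is making the two competing powers of $t$ behave uniformly across the whole range $1 \le t \le n/25$: for tiny $t$ the danger is the $(en/t)^t$ choice factor, while for $t$ near $n/25$ the danger is that $6et/n$ creeps toward $1$. Both are handled by the slack in the constants: $t \le n/25$ keeps $6et/n \le 6e/25 \approx 0.65$ bounded away from $1$, and $k \ge 100$ makes $k/8$ large enough that even the worst-case ratio to the fixed-constant power is summably small. I would therefore just split the sum at, say, $t \le \sqrt n$ and $t > \sqrt n$ if a clean uniform estimate is awkward, but in fact a single line with the bracket $\le 1/2$ bound suffices. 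Concluding, with probability $1 - o(1)$ no small set is this dense, so every induced subgraph on at most $n/25$ vertices has average degree at most $k/4$.
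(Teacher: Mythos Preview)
Your approach is essentially the same as the paper's: both fix a size $s$, bound the probability that some $s$-set spans at least $ks/8$ edges via $\binom{n}{s}\binom{\binom{s}{2}}{ks/8}p^{ks/8}$ (equivalently your binomial tail bound $(e\mu/m)^m$), obtain the summand $\big[(en/s)(6es/n)^{k/8}\big]^s$, and then sum over $1\le s\le n/25$ with a split into small and large $s$. The paper rewrites the bracket as $\big[(6e^2)^8(6es/n)^{k-8}\big]^{1/8}$ and uses $k-8\ge 80$ to absorb the constant into a clean $(24s/n)^{10s}$, then splits at $s=\log n$; your split at $\sqrt n$ works equally well. The only place your write-up is slightly loose is the sentence ``the whole bracket is at most $1/2$ \ldots\ so the sum \ldots\ tends to $0$'': a uniform $1/2$ bound on the bracket only gives $\sum_t 2^{-t}=O(1)$, not $o(1)$, so you do need the small-$t$ observation (which you supply) or the split to conclude. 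With that caveat, the argument is correct and matches the paper.
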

\begin{proof}
For a subset $S$ of the vertices of size $|S|=s$, let $A_S$ denote the event that the induced subgraph of $G$ on $S$ contains at least
$s k / 8$ edges. Then for any $1 \leq s \leq n/20$ we have
\begin{align*} \P\big(\bigcup_{S:|S|=s} A_S\big) &\leq
\binom{n}{s}\binom{\binom{s}{2}}{s k/8}
p^{s k/8}  \leq \left(
 \frac{\mathrm{e} n}{s}
 \left(\frac{\mathrm{6e}s}{n}\right)^{\frac{k}{8}}
\right)^{s} = \left(
\left(6\mathrm{e}^2\right)^8 \left(6\mathrm{e}\frac{s}{n}\right)^{k-8}
 \right)^{\frac{s}{8}} \\
 &\leq \left(6^{\frac{11}{10}} \mathrm{e}^{\frac{6}{5}}\frac{s}{n}
 \right)^{10s} < \left(\frac{24s}{n}\right)^{10s} ~, \end{align*}
 where the first inequality in the second line follows from the fact that $k-8 \geq 80$ (with room to spare). This implies the  following upper bound on the probability of $\{A_S : |S|\leq n/25\}$:
\begin{align*}
\sum_{s=1}^{n/25}
\left(\frac{24s}{n}
 \right)^{10s} \leq
 \sum_{s=1}^{\log n}
\left(\frac{24\log n}{n}
 \right)^{10s} + \sum_{s=\log n}^{n/25}
\left(\frac{24}{25}
 \right)^{10s}
 <  2\left(\frac{24\log n}{n}\right)^{10} + 25\left(\frac{24}{25}\right)^{10\log n}=o(1)\;,
\end{align*}
as required.
\end{proof}
The above lemma has the following immediate corollary:
\begin{corollary}\label{cor-set-expansion-in-H}
  For $D \geq 100$, set $p=\frac{3}{2}D/n$, let $G\sim \mathcal{G}(n,p)$ and let $H=(V_H,E_H)$ be the $D$-core of $G$. Then $\whp$, every set $S \subset V_H$ of size $1 \leq s \leq n/100$ has at least $3s$
  neighbors in $V_H\setminus S$.
\end{corollary}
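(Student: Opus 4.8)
The plan is to deduce the statement directly from Lemma~\ref{lem-small-avg-deg} by a short averaging argument, pitting the minimum-degree guarantee of the $D$-core against the sparsity of small induced subgraphs. Concretely, I would work on the (high-probability) event that $G\sim\mathcal{G}(n,\tfrac32 D/n)$ satisfies the conclusion of Lemma~\ref{lem-small-avg-deg} with $k=D$, i.e.\ that every induced subgraph of $G$ on at most $n/25$ vertices has average degree at most $D/4$. Observe that no control on the size of $H$ is needed for this particular claim, so the size of $H$ (which is the content of Lemma~\ref{lem-d-core}) plays no role here; this is the only piece of randomness to be invoked.

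Next I would argue by contradiction. Suppose some $S\subseteq V_H$ with $|S|=s\le n/100$ has fewer than $3s$ neighbors in $V_H\setminus S$; write $N:=N_H(S)\setminus S$, so that $|N|<3s$, and put $T:=S\cup N$, whence $|T|<4s\le n/25$. Since $H$ is the $D$-core of $G$ and $S\subseteq V_H$, every $v\in S$ satisfies $\deg_H(v)\ge D$ and all of its $H$-neighbors lie in $V_H$; by the definition of $N$ they in fact all lie in $S\cup N=T$. As $T\subseteq V_H$ we have $G[T]=H[T]$, and counting the edges of $G[T]$ incident to $S$ gives
\[
 e(G[T]) \;\ge\; \tfrac12\sum_{v\in S}\deg_H(v)\;\ge\;\tfrac12 Ds .
\]
Hence the average degree of $G[T]$ is at least $Ds/|T|>Ds/(4s)=D/4$, contradicting the conditioning since $|T|\le n/25$. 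Therefore \whps every such $S$ has at least $3|S|$ neighbors in $V_H\setminus S$, which is the assertion of the corollary.

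I do not expect a genuine obstacle here: the whole argument is a deterministic consequence of a single high-probability event. The one point that needs care is to check that all neighbors used in the edge count really lie in the core, so that the core's minimum-degree bound applies and $G[T]=H[T]$ — this is exactly why the statement restricts to $S\subseteq V_H$ and measures neighborhoods within $V_H$. The lone numerical verification is that $4\cdot\frac{n}{100}=\frac{n}{25}$, which keeps $T$ inside the regime covered by Lemma~\ref{lem-small-avg-deg}.
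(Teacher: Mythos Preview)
Your proposal is correct and follows essentially the same route as the paper's proof: condition on the conclusion of Lemma~\ref{lem-small-avg-deg} with $k=D$, take a hypothetical bad set $S\subset V_H$, observe that $T=S\cup N_H(S)$ has fewer than $4|S|\le n/25$ vertices while carrying at least $D|S|/2$ edges (from the minimum-degree guarantee of the $D$-core), and derive a contradiction with the average-degree bound. Your write-up is in fact a bit more careful than the paper's in making explicit that $G[T]=H[T]$ and that all $H$-neighbors of $S$ land in $T$.
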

\begin{proof}
Condition on the statement of Lemma \ref{lem-small-avg-deg} for $k=D$, and
suppose that some $S \subset V_H$ of size $|S| \leq n/100$
satisfies $|N(S) \cap (V_H \setminus S)| < 3|S|$. In this case,
 the induced subgraph of $G$
on $S \cup N(S)$ contains strictly less than $4|S| \leq n/25$ vertices and at least $D|S|/2$ edges, hence
its average degree is strictly more than $D/4$, contradicting Lemma \ref{lem-small-avg-deg}.
\end{proof}

\begin{remark}\label{rem-connected}
It is easy to verify that the $D$-core constructed above is connected \whp. To see this, recall that our graph is an induced subgraph of a random graph $\mathcal{G}(n,p)$ with $p=\frac{3D}{2n}$ and $D \geq 100$, and furthermore, every nonempty set of size $s \leq n/100$ in the $D$-core has at least $3s$ external neighbors. Therefore, connectivity will immediately follow once we show that $\mathcal{G}(n,p)$, for the above value of $p$, almost surely does not contain any connected component of size $\frac{n}{100} \leq s \leq \frac{n}{2}$. Indeed, this latter fact follows from the following simple calculation:
 \begin{align*}\sum_{s=\frac{n}{100}}^{n/2}\binom{n}{s}(1-p)^{s(n-s)} &\leq \sum_{s=\frac{n}{100}}^{n/2}\left(\frac{\mathrm{e}n}{s}\mathrm{e}^{-p(n-s)}\right)^s
\leq \sum_{s=\frac{n}{100}}^{n/2}\left(100\mathrm{e}^{1-pn/2}\right)^s \leq \sum_{s=\frac{n}{100}}^{n/2}\left(100\mathrm{e}^{-74}\right)^s =o(1)\;.\end{align*}
\end{remark}

\subsection{Constructing a bipartite expander on $(H,H^c)$}\label{sec:bipartite}
In this phase of the algorithm, we create a random bipartite graph
with parts $V_H,V_H^c$, in which the degree of every vertex in
$V_H^c$ is at least $d=20$. Moreover, the neighbors of each vertex
in $V_H^c$ are uniformly distributed over $V_H$. This is achieved
by a \greedy algorithm, comprising two stages, as we next describe.
\begin{enumerate}

\item In the first stage, for $j\in\{0,\ldots,d-1\}$ we attempt to add an edge between $V_H$ and a
vertex of degree $j$ in $V_H^c$, whenever such an edge is presented, settling ambiguities randomly.
Performing this stage for each of the above values of $j$ in a sequence, each time for
$\frac{\epsilon}{2dK}n\log n$ rounds, already suffices in order to provide $1-\frac{\epsilon}{2K}$
fraction of
the vertices of $V_H^c$ with at least $d$ neighbors in $V_H$. This is established in Lemma
\ref{lem-greedy-stage-1} below.

\item Let $X\subset V_H^c$ denote the set of vertices with less than $d$ neighbors in $V_H$ once the
first stage is done. In the second stage, we ``freeze'' the set $X$, and let the \greedy algorithm
prefer edges between $X$ and $V_H$ (disregarding the actual degrees of the vertices in $X$). With high
probability, $(\frac{1}{2}+\epsilon)\frac{n}{K}\log n$ rounds of this stage suffice to provide every
vertex in $X$ with at least $d$ neighbors in $V_H$. This is shown in Lemma \ref{lem-greedy-stage-2}.

\end{enumerate}
\bigskip
\begin{lemma}\label{lem-greedy-stage-1}
Let $d$ be some fixed integer, and $U,W$ be a partition of the
vertices with $|U| \leq |W|$. Consider a \greedy algorithm which,
for each $j\in\{0,\ldots,d-1\}$, performs $T_1=
\frac{\epsilon}{2dK}n\log n$ rounds, where it chooses an edge
between a vertex of degree $j$ in $U$ and a vertex in $W$ whenever
possible, settling ambiguities randomly. Then \whp, the resulting
graph contains at most $\frac{\epsilon}{2K} n$ vertices with
degree smaller than $d$. Moreover, the neighbors of each vertex of
$U$ are uniformly distributed on $W$.
\end{lemma}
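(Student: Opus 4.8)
The plan is to analyse the algorithm one phase at a time, keeping track only of the edges a vertex \emph{acquires via the greedy rule} (the edges forced onto the algorithm in rounds that present no admissible edge never increase any such degree and may be ignored in the bookkeeping below). For $j\in\{0,\dots,d-1\}$ let $r_j(t)$ be the number of vertices of $U$ whose acquired degree is exactly $j$ just before the $t$-th round of phase $j$. Since an admissible edge of phase $j$ raises an acquired degree from $j$ to $j+1$ and no other edge changes it, an immediate induction shows that at the start of phase $j$ every acquired degree is at most $j$, the vertices of acquired degree exactly $j$ are precisely those still in need of help, and $r_j(t)$ is non-increasing in $t$ (a helped vertex leaves this set and nothing enters it). It follows that a vertex of $U$ acquires fewer than $d$ edges if and only if it has acquired degree exactly $j$ throughout phase $j$ for some (unique) $j$, and the number of such vertices equals the final value of $r_j$. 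Hence it suffices to prove that \whps phase $j$ ends with $r_j\le\theta$ for every $j$, where $\theta\deq\frac{\epsilon}{2dK}n$; then the number of vertices of degree $<d$ is at most $d\theta=\frac{\epsilon}{2K}n$.

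The heart of the matter is the one-round estimate: \emph{conditioned on the history before the $t$-th round of phase $j$, if $r_j(t)\ge\theta$ then with probability at least $c\deq\frac{\epsilon}{5d}$ one of the $K$ presented edges is admissible} --- in which case the algorithm must take one and $r_j$ drops by one. This uses three facts. (i) Only $o(n^2)$ edges are ever present, so each fixed missing edge is among the $K$ presented with probability $(1-o(1))\frac{K}{\binom n2}$ (whether the $K$ are sampled with or without replacement is immaterial here). (ii) Since $|W|\ge|U|$ we have $|W|\ge n/2$, so every vertex of acquired degree $j$ is incident to at least $|W|-o(n)\ge n/3$ admissible missing edges, whence there are at least $\theta\cdot n/3=\Theta(n^2/K)$ admissible edges in all when $r_j(t)\ge\theta$. (iii) Therefore the probability that none of the $K$ presented edges is admissible is at most $\big(1-(1-o(1))\tfrac{\theta n/3}{\binom n2}\big)^{K}\le\exp\!\big(-(1-o(1))\tfrac{\epsilon}{3d}\big)\le 1-c$ for large $n$, using $\epsilon<\frac1{100}$.

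From here a coupling plus a Chernoff bound finishes the degree part. Enlarging the probability space, couple the process with i.i.d.\ $\mathrm{Bernoulli}(c)$ variables $\eta_1,\dots,\eta_{T_1}$ so that whenever $r_j(t)\ge\theta$ and $\eta_t=1$, round $t$ decreases $r_j$. On the event that phase $j$ ends with $r_j>\theta$, monotonicity forces $r_j(t)\ge\theta$ for all $t\le T_1$, so the total decrease of $r_j$ in the phase is at least $\sum_{t\le T_1}\eta_t$; but this total decrease is at most $r_j(0)\le|U|\le n$, so the event is contained in $\{\sum_{t\le T_1}\eta_t\le n\}$. Since $\E\sum_{t\le T_1}\eta_t=cT_1=\frac{\epsilon^2}{10d^2K}\,n\log n$, which exceeds $2n$ once $n$ is large (here $K=o(\log n)$ is used), a Chernoff bound gives probability $e^{-\Omega(n\log n/K)}$, and a union bound over the $d$ choices of $j$ completes the proof that \whps at most $\frac{\epsilon}{2K}n$ vertices have degree $<d$.

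For the ``moreover'' clause I would observe that whenever the greedy rule adds an edge incident to a given $v\in U$, that edge is uniform among the admissible presented edges incident to the vertex chosen in that round, and a uniformly random missing edge of the form $\{v,w\}$ with $w\in W$ has $w$ uniform over $W\setminus N(v)$; since $|N(v)\cap W|$ is at most $d$ plus the $o(n)$ ambient neighbours of $v$, each neighbour $v$ acquires is within a $1+o(1)$ factor of uniform on $W$, and (conditioning on the acquired degree) the neighbourhoods are near-uniform subsets of $W$ by exchangeability --- the form in which this is used in Phase~\ref{alg-phase-3}. The step I expect to be the real obstacle is the one-round estimate (ii)--(iii): one must confirm that the pool of admissible edges is $\Omega(n^2/K)$ \emph{uniformly over all histories} with $r_j(t)\ge\theta$, and be careful about the bookkeeping of greedily-acquired versus forced edges; the coupling, the concentration bound, and the near-uniformity of the neighbourhoods are then routine.
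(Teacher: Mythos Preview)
Your argument is correct and actually more direct than the paper's. Both proofs rest on the same one-round estimate: while $|X_j|\ge\theta=\frac{\epsilon}{2dK}n$, the probability that a round presents an admissible edge is bounded below by a positive constant (the paper gets $1-\mathrm{e}^{-\epsilon/(2d)}$ after relaxing to $K$ i.i.d.\ pairs from $[n]^2$; you get $\epsilon/(5d)$ in the original model). From there the two diverge. The paper runs a multi-stage analysis: it groups rounds into blocks of length $c_1|X_j|$, argues that each block yields $2|X_j|$ ``random edges incident to $X_j$'', invokes a balls-and-bins Poisson approximation to shrink $|X_j|$ by a factor of $\mathrm{e}$ per block, and sums the resulting geometric series to $O(n)$ total rounds --- which is $o(T_1)$ since $K=o(\log n)$. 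You instead exploit the elementary fact that each admissible selection decreases $r_j$ by exactly one, so on the event $\{r_j>\theta\text{ throughout phase }j\}$ there can be at most $r_j(0)\le n$ successes, whereas the coupled binomial has mean $cT_1=\Theta\!\big(\tfrac{n\log n}{K}\big)=\omega(n)$; a single Chernoff bound then suffices. Your route bypasses the balls-and-bins step entirely (which, as written in the paper, is in fact superfluous: once $|X_j|$ drops by one per success, $2|X_j(t_0)|$ successes would already overshoot zero). What the paper's route buys is the sharper conclusion that $O(|X_j(t_0)|)$ rounds suffice to reach the threshold from any starting point --- this is later reused in the proof of Lemma~\ref{cor-rand-d-out}, where only $O(n/\sqrt{K})$ rounds are available. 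For the uniformity clause, the paper passes to the ordered-pairs model and gets exact uniformity of the $W$-endpoint by independence of the two coordinates; your near-uniformity in the original model is adequate for the application. One small bookkeeping point: your distinction between ``acquired'' and actual degree is cleanest if you stipulate that forced edges are chosen outside the $U$--$W$ cut (or that the greedy rule targets acquired rather than actual degree); either convention is a legitimate specification of the algorithm and makes your induction on $j$ literally true.
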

\begin{proof}
Since we are establishing an asymptotical bound on the threshold
of the minimal degree, we may allow our algorithm to ignore a
negligible number of rounds. We may thus consider a relaxed
version of the input of each round and allow repeated edges and self loops. That is, at each round we are presented with $K$ ordered pairs chosen independently and uniformly from $[n]^2$. Whenever this selection of $K$ ordered pairs contains either a repeating edge (one that already appears in our graph), or a loop, we ignore this round.
Notice that the probability for this event at a given time $t$ is
at most $K(2t+n)/n^2$. Our analysis focuses on the period $t = O(n
\log n)$ of the graph process, hence for any $K = o(n/\log n)$
this probability is clearly negligible, and the number of rounds we are
ignoring has no affect on our asymptotical upper bound.

Furthermore, recall that by definition, our algorithm selects a random edge out of all those which belong to the cut between $U$ and $W$ (whenever such edges are presented). Clearly, conditioning on the appearance of such edges in a round, each such edge may be treated as the result of two \emph{independent} choices corresponding to the two endpoints: the first is uniformly chosen from $U$, and the second is uniformly chosen from $W$. Thus, since our algorithm decides between these edges solely on the basis of their endpoints in $U$, each edge selected in this manner has an endpoint which remains uniformly distributed in $W$.

It remains to analyze the degrees of the vertices of $U$. Let $X_j=X_j(t)$ denote the set of vertices of $U$ degree $j$ at time $t$, and let $A_t$ denote the event
that an edge between $X_j$ and $W$ appears among the $K$ edges of a round $t$. Then as long as $|X_j| > \frac{\epsilon}{2dK}n$ (and recalling that $|W|\geq n/2$), the probability of $A_t$ satisfies
\begin{equation}
  \label{eq-At-equation-first-st}
  \P(A_t) = 1 - \left(1-\frac{2|X_j||W|}{n^2}\right)^K \geq 1 - \exp\Big(-2K\frac{|X_j||W|}{n^2}\Big) \geq 1-\exp\Big(-\frac{\epsilon}{2d}\Big)~.
\end{equation}
In particular, $\P(A_t)$ is bounded from below by some positive constant. Therefore, for any given
time $t_0$, Chernoff-type concentration results (see, e.g., \cite{ProbMethod}*{Appendix A}) imply the
following. For some constant $c_1 > 0$, performing $\Delta_0 \deq c_1 |X_j(t_0)|$ rounds either reduces
$|X_j(t_0+\Delta_0)|$ below $\frac{\epsilon}{2dK}n$, or with probability at least
$1-\exp(-\Omega(n/K))$
yields at least $2|X_j(t_0)|$ random edges incident to $X_j(t_0)$. Henceforth, condition on this event.

The classical balls and bins experiment asserts that, when throwing $r \cdot m$ balls independently and
uniformly onto $m$ bins, where $r > 0$ is fixed and $m\to\infty$, the distribution of the fraction of
the bins with precisely $\ell$ balls ($\ell=0,1,\ldots$) converges to a Poisson distribution with mean
$r$ (see, e.g., \cite{Feller}, \cite{JK}). In particular, the expectation and variance of the number of empty bins
in the above experiment tend to $m \mathrm{e}^{-r}$ and $O(m)$ respectively. Applying this to our
setting, where $r = 2$ and $m = |X_j(t_0)|\geq \frac{\epsilon}{2dK}n$, we deduce that the size of
$X_j(t_0+\Delta_0)$ is reduced to at most $|X_j(t_0)|/\mathrm{e}$ with probability at least $1-O(1/m)
\geq 1 - O(K/n)$.

Repeating this argument for $t_l = t_{l-1}+\Delta_{l-1}$, $\Delta_l=c_1 |X_j(t_l)|$ and $l=1,\ldots,\log
n$ (accumulating the individual error probabilities of $O(K/n)$ easily allows this number of
repetitions) we deduce that after $$\sum_l \Delta_l \leq c_1 |X_j(t_0)| \sum_l \mathrm{e}^{-l} \leq 2c_1
n$$ rounds, $|X_j| < \frac{\epsilon}{2dK}n$ \whp. Since $K=o(\log n)$, we indeed have
$\epsilon\frac{n}{2dK} \log n = \omega(n)$ rounds at our disposal for this stage.

By applying the same argument for $j$ in $\{1,\ldots,d-1\}$, we obtain that at the end of
$\frac{\epsilon}{2K}n\log n$ rounds, \whp $~|\cup_{j<d}X_j| \leq \frac{\epsilon}{2K} n$,
completing the proof. \end{proof}
\medskip
\begin{lemma}\label{lem-greedy-stage-2}
Let $d$ be some fixed integer and $U,W$ be two disjoint sets of vertices with $|U| \leq \frac{\epsilon}{2K} n$ and $|W| \geq (1-\epsilon)n$. Consider a \greedy algorithm which chooses an edge between vertices $U$ and $W$ whenever one appears (otherwise, this round is ignored), settling ambiguities randomly. Then performing this algorithm for $T_2=(\frac{1}{2}+\epsilon)\frac{n}{K}\log n$ rounds gives a graph where, \whp, every vertex of $U$ has at least $d$ neighbors uniformly distributed over $W$.
\end{lemma}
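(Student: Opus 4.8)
\emph{Proof plan.} The plan is to follow, separately for each vertex $u \in U$, its number of neighbours in $W$, to show that within $T_2$ rounds this number reaches $d$ except with probability $o(1/n)$, and then to take a union bound over the at most $n$ vertices of $U$. At each round the $K$ presented edges are uniform among the current non-edges; if at least one of them joins $U$ to $W$, the algorithm --- breaking ties uniformly and disregarding degrees --- adds one such edge, which by symmetry is a \emph{uniformly random} $U$--$W$ non-edge of the current graph. In particular every edge we ever add is a fresh $U$--$W$ edge, so a vertex of $U$ gains a genuinely new $W$-neighbour each time it is selected and there is no double counting; one may pass to the i.i.d.\ relaxation of the rounds as in the proof of Lemma~\ref{lem-greedy-stage-1} if preferred.

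\emph{The key per-round estimate.} Fix $u \in U$, condition on the current graph, and suppose $\deg_W(u) = j < d$. Writing $\bar e$ for the current number of $U$--$W$ non-edges, we have $|W| - d \le |W| - j \le \bar e \le |U|\,|W|$, and when a round is \emph{productive} (presents at least one $U$--$W$ non-edge) the edge added is incident to $u$ with probability exactly $(|W|-j)/\bar e$. On the other hand, since $|U| \le \frac{\epsilon}{2K}n$ we have $\frac{2K\bar e}{n^2} \le \frac{2K|U|\,|W|}{n^2} \le \epsilon$, so the probability that a round is productive is at least $1-\big(1-\bar e/\binom n2\big)^{K} \ge (1-o(1))\big(1-\tfrac\epsilon2\big)\tfrac{2K\bar e}{n^2}$. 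Multiplying these, the factor $\bar e$ cancels; using $|W| \ge (1-\epsilon)n$ and that $d$ is a constant we obtain, for $n$ large,
\[
\P\big(u \text{ gains a new $W$-neighbour this round}\ \big|\ \text{current graph},\ \deg_W(u) < d\big)\ \ge\ \rho\ \deq\ (1-\epsilon)\big(1-\tfrac\epsilon2\big)(1-o(1))\,\tfrac{2K}{n}.
\]
Moreover that new neighbour is uniform over the $\ge |W|-d$ vertices of $W$ not yet adjacent to $u$, so since $u$ acquires only $d = O(1)$ neighbours one may couple these choices with i.i.d.\ uniform choices on $W$ up to total-variation error $o(1)$, which yields the ``uniformly distributed over $W$'' part of the statement.

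\emph{Finishing, and where the difficulty lies.} Since the estimate above holds in every round in which $u$ still has degree below $d$, the number of rounds among the first $T_2$ in which $u$ is selected stochastically dominates $\min\{d,\mathrm{Bin}(T_2,\rho)\}$, whence $\P(\deg_W(u)<d\text{ after }T_2)\le\P(\mathrm{Bin}(T_2,\rho)<d)$. The crucial quantitative point is that $T_2\rho$ must be made to \emph{exceed} $\log n$: since $T_2=(\tfrac12+\epsilon)\tfrac nK\log n$,
\[
T_2\,\rho\ \ge\ (1-\epsilon)\big(1-\tfrac\epsilon2\big)(1+2\epsilon)(1-o(1))\log n\ \ge\ \big(1+\tfrac\epsilon8\big)\log n
\]
for $\epsilon<\tfrac1{100}$ and $n$ large, which is exactly what the slack ``$+\epsilon$'' beyond the coupon-collector value ``$\tfrac12$'' buys. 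Then a standard lower-tail bound for a binomial of mean $\Theta(\log n)$ with success probability $o(1)$ gives $\P(\mathrm{Bin}(T_2,\rho)<d)=O\big((\log n)^{d-1}\mathrm{e}^{-T_2\rho}\big)=O\big((\log n)^{d-1}n^{-1-\epsilon/8}\big)$, and summing over the at most $n$ vertices of $U$ yields $o(1)$, as needed. The main obstacle is precisely this bookkeeping: one must lose only a $(1-o(1))$ factor when $\bar e$ cancels (so that $\rho\approx 2K/n$ holds \emph{uniformly in the current graph}, even after the vertices of $U$ have absorbed many edges), and only a $(1-\tfrac\epsilon2)$ factor --- which is what the hypothesis $|U|\le\frac{\epsilon}{2K}n$ controls --- from the possibility that several $U$--$W$ edges compete in a single round; with those losses under control, the genuine loss from $|W|\ge(1-\epsilon)n$ is absorbed by the factor $\tfrac12+\epsilon$ in $T_2$.
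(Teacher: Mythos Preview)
Your proof is correct and follows essentially the same approach as the paper's. The only structural difference is that the paper first concentrates the total number $M$ of productive rounds and then models each vertex's degree as $\mathrm{Bin}(M,1/|U|)$, whereas you collapse these two steps into one by directly bounding the per-round probability $\rho$ that a fixed $u\in U$ is hit and working with $\mathrm{Bin}(T_2,\rho)$; both routes use the hypothesis $|U|\le\frac{\epsilon}{2K}n$ in the same way (to control competition among the $K$ edges via $1-(1-p)^K\ge Kp(1-Kp/2)$) and arrive at the same binomial tail with mean $(1+\Theta(\epsilon))\log n$.
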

\begin{proof}
As in the proof of Lemma \ref{lem-greedy-stage-1}, we may assume that each rounds presents $K$ ordered pairs, uniformly and independently chosen from $[n]^2$. Recall that whenever the algorithm selects an edge between $U$ and $W$, its two endpoints are uniformly distributed over $U$ and $W$ respectively.

Letting $A_t$ denote the probability of witnessing an edge between $U$ and $W$ at time $t$, we have:
\begin{align}
\P(A_t) &= 1-\left(1-\frac{2|U||W|}{n^2}\right)^K \geq 1-\Big(1-2(1-\epsilon)\frac{|U|}{n}\Big)^{K} \nonumber\\ &\geq 2(1-\epsilon)K\frac{|U|}{n} \left(1 - K\frac{|U|}{n}\right) \geq
(2-3\epsilon)K\frac{|U|}{n}~,  \label{eq-At-equation-second-st}
\end{align}
where the first inequality in the second line is by the well-known fact that
$(1-p)^k \leq 1-kp+\binom{k}{2}p^2$ for any $0<p < 1$ and integer $k \geq 2$, and the last inequality is
by our assumption on the size of $U$.

 Therefore, the total number of rounds with edges incident to $U$ along $T_2=
 (\frac{1}{2}+\epsilon)\frac{n}{K}\log n$ consecutive rounds
 stochastically dominates a binomial
 random variable with mean $$T_2 \cdot (2-3\epsilon)K |U|/n =
 \left(1+\mbox{$\frac{1}{2}$}\epsilon-3\epsilon^2\right)|U|\log n \geq
 (1+\mbox{$\frac{2}{5}$}\epsilon)|U|\log n~,$$
 where in the last inequality we used the fact that $\epsilon \leq
 1/30$. It follows that
the probability of observing at least $M =
(1+\frac{\epsilon}{3})|U|\log n$ edges incident to $U$ along those $T_2$
rounds is at least
$1-n^{-\Omega(|U|)}$. Condition on this event.

 Let $X(t) \subset U$ denote the vertices of $U$ which have degree smaller than $d$ at time $t$.
 Since any edge selected by the algorithm has one endpoint uniformly and independently distributed on $U$, we deduce that, for large $n$, any $v \in U$ satisfies
\begin{align*}\P\left(v \in X(T_2)\right) &\leq \P\left( \mathrm{Bin}(M, 1/|U|) < d\right)
  \leq d \cdot\P\left(\mathrm{Bin}(M,1/|U|) = d-1\right) \\
  &\leq d\binom{M}{d-1}|U|^{-(d-1)}
  \mathrm{e}^{-(1-o(1))\frac{M}{|U|}}
  \leq d\left(\frac{(1+\frac{\epsilon}{3})\mathrm{e}}{d-1}\log n\right)^{d-1} n^{-1-\frac{\epsilon}{3}+o(1)}\leq n^{-1-\epsilon/4}~,
\end{align*}
where the last inequality in the second line holds for any sufficiently large $n$, and the last inequality in the first line is by the monotonicity of the binomial distribution, as
$\frac{M}{|U|} = (1+\frac{\epsilon}{3})\log n > d$ for any sufficiently large $n$.

It follows that $\E\left[ |X(T_2)|\right] \leq n^{-\epsilon / 4}$, and thus $X(T_2)$ is \whps empty, as required.
\end{proof}

Once we apply Lemmas \ref{lem-greedy-stage-1} and \ref{lem-greedy-stage-2}, we will have obtained a
random bipartite graph on $V_H,V_H^c$, whose expansion will follow from the next lemma:

\begin{lemma}\label{lem-set-expansion-in-H-bar}
Let $G$ be the following random bipartite graph on
$U,W$: $|U|\geq \frac{2}{3}n$, $|W| \leq n$ for some integer $n$, and every vertex of $W$ has $20$
neighbors independently and uniformly chosen from $U$. Then \whp, every set $S \subset W$ of size $1
\leq s \leq n/100$ satisfies $|N(S) \cap U| > 8s$. \end{lemma}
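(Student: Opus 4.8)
The plan is to prove Lemma~\ref{lem-set-expansion-in-H-bar} via a first-moment (union bound) argument over all ``bad'' pairs $(S,T)$ with $S\subseteq W$, $|S|=s$, $T\subseteq U$, $|T|=8s$, such that all $20s$ random endpoints emanating from $S$ land inside $T$. First I would fix $s$ with $1\le s\le n/100$ and bound the probability that a fixed such pair is bad: since each vertex of $W$ chooses its $20$ neighbors independently and uniformly from $U$, and $|U|\ge \frac23 n$, the probability that all $20$ choices of a given vertex of $S$ fall in a fixed set $T$ of size $8s$ is $(8s/|U|)^{20}\le (12s/n)^{20}$, and by independence over the $s$ vertices of $S$ this is at most $(12s/n)^{20s}$. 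Then I would multiply by the number of choices of $S$ and $T$, namely $\binom{|W|}{s}\binom{|U|}{8s}\le \binom{n}{s}\binom{n}{8s}\le \left(\frac{\mathrm{e}n}{s}\right)^{s}\left(\frac{\mathrm{e}n}{8s}\right)^{8s}$.

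The core of the argument is then a routine estimate showing that the resulting bound is $o(1)$ after summing over $s$. Collecting terms, the summand is at most
\begin{equation*}
\left(\frac{\mathrm{e}n}{s}\left(\frac{\mathrm{e}n}{8s}\right)^{8}\left(\frac{12s}{n}\right)^{20}\right)^{s}
= \left(C\left(\frac{s}{n}\right)^{11}\right)^{s}
\end{equation*}
for an absolute constant $C$ (coming from $\mathrm{e}^{9}\cdot 8^{-8}\cdot 12^{20}$). Since $s/n\le 1/100$, the quantity $C(s/n)^{11}$ is bounded by $C\,100^{-11}(s/n)\cdot 100^{-0}$, which is at most, say, $(s/n)$ times a small constant, hence strictly less than $1$ for all relevant $s$; more precisely one gets a bound of the form $(c'\,s/n)^{s}$ with $c'<1$ for all $1\le s\le n/100$. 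As in the proof of Lemma~\ref{lem-small-avg-deg}, I would split the sum $\sum_{s=1}^{n/100}(c's/n)^{s}$ at $s=\log n$: the initial terms $s\le\log n$ are each $O((\log n/n)^{s})$ and sum to $O(\log n/n)=o(1)$, while the tail $\log n\le s\le n/100$ is geometric with ratio bounded away from $1$, summing to $O((c'/100)^{\log n})=o(1)$.

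Finally I would note that once every $S\subseteq W$ of size $1\le s\le n/100$ has $|N(S)\cap U|>8s$ in this abstract bipartite model, the lemma follows directly, and that by Lemmas~\ref{lem-greedy-stage-1} and~\ref{lem-greedy-stage-2} the bipartite graph built in Phase~\ref{alg-phase-2} (with $U=V_H$, $W=V_H^c$, $d=20$, so $|V_H|\ge(1-\epsilon)n\ge\frac23 n$ and $|V_H^c|\le n$) is precisely an instance of this model, each vertex of $V_H^c$ having $20$ uniformly and independently chosen neighbors in $V_H$. I do not expect any genuine obstacle here: the only mild subtlety is getting the constants in the exponent to work out (we need the exponent $11=20-8-1$ on $s/n$ to be positive, and it comfortably is, with room to spare), and making sure the abstract model correctly matches the output of the greedy phase, in particular that the choices are independent across the vertices of $V_H^c$ and uniform over $V_H$ — which is exactly what the ``moreover'' clauses of Lemmas~\ref{lem-greedy-stage-1} and~\ref{lem-greedy-stage-2} provide.
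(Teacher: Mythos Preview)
Your proposal is correct and follows essentially the same approach as the paper: a first-moment union bound over pairs $(S,T)$ with $|S|=s$, $|T|=8s$, bounding the probability that all neighbor choices from $S$ land in $T$, then summing over $s$ with a split at $s=\log n$. The paper uses the without-replacement probability $\binom{8s}{20}/\binom{u}{20}\le (8s/(u-19))^{20}$ rather than your $(8s/|U|)^{20}$, and keeps $u=|U|$ explicit rather than bounding $\binom{|U|}{8s}\le\binom{n}{8s}$ (note the latter implicitly uses $|U|\le n$, which holds in the application but is not stated in the lemma); both routes yield a bound of the form $(c\,s/n)^{11s}$ with $c<100$, and the rest is identical.
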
 \begin{proof} Let $u=|U|$, and let $1
\leq s \leq n/100$. The probability that there exists a subset $S\subset W$ of cardinality $s$ such that
$|N(S) \cap U| \leq 8s$, is at most \begin{align*}&\binom{n}{s}\binom{u}{8s}
\left(\binom{8s}{20}/\binom{u}{20}\right)^s
\leq \left(\left(\frac{\mathrm{e}n}{s}\right)\left(\frac{\mathrm{e}u}{8s}\right)^8
\left(\frac{8s}{u-19}\right)^{20}\right)^{s} = \left((1+o(1))\frac{8^{12}
\mathrm{e}^{9}s^{11}}{u^{12}/n} \right)^{s}
 \\
 &\leq
 \left((1+o(1))\left(2^{24} \cdot 3^{12}\cdot\mathrm{e}^9\right)^{\frac{1}{11}} \cdot\frac{s}{n}
\right)^{11s} \leq \left(\frac{50s}{n}\right)^{11s}~,
 \end{align*}
where the first inequality in the last line is by the assumption that $u \geq 2n/3$, and the inequality following it
holds for any sufficiently large $n$. Therefore, the following calculation, similar to the one made in Lemma \ref{lem-small-avg-deg}, gives that the probability that there exists a set $S$ of size $1 \leq s \leq n/100$ that satisfies $|N(S)\cap U| \leq 8 s$ is at most
\begin{align*}
\sum_{s=1}^{n/100}
\left(\frac{50s}{n}
 \right)^{11s} \leq
 \sum_{s=1}^{\log n}
\left(\frac{50\log n}{n}
 \right)^{11s} + \sum_{s=\log n}^{n/100}
2^{-11s}
 <  2\cdot \left(\frac{50\log n}{n}\right)^{11} + 2\cdot 2^{-11\log n}=o(1)~.
\end{align*}
This completes the proof of the lemma.
\end{proof}

Apply Lemma \ref{lem-greedy-stage-1} with $d=20$, $W=V_H$ and $U=V_H^c$, followed by Lemma
\ref{lem-greedy-stage-2} with $W=V_H$ and $U$ being the remaining vertices of $V_H^c$ of degree smaller
than $20$.  Then Lemma \ref{lem-set-expansion-in-H-bar} (with $V_H,V_H^c$ playing the roles of $U,W$
respectively) yields: \begin{corollary}\label{cor-set-expansion}
  The resulting graph $G$ satisfies the following \whp: every set $S \subset V(G)$ of size at most
$n/100$ has strictly more than $2|S|$ neighbors in $V(G) \setminus S$.
\end{corollary}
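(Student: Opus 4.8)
The plan is to derive this from the two expansion statements already in hand: the internal vertex expansion of $H$ (Corollary~\ref{cor-set-expansion-in-H}) and the bipartite expansion of $(V_H,V_H^c)$ (Lemma~\ref{lem-set-expansion-in-H-bar}), combined through a short two-case argument. First I would condition on both of these events, which is legitimate since each holds \whp. Fix a nonempty $S\subseteq V(G)$ with $|S|\le n/100$ and set $S_1=S\cap V_H$ and $S_2=S\cap V_H^c$, so that $S=S_1\cup S_2$ is a disjoint decomposition with $|S|=|S_1|+|S_2|$ and $|S_1|,|S_2|\le n/100$.

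The key point is that the two relevant neighborhoods automatically land outside $S$. Since $S\cap V_H=S_1$, any neighbor of $S_1$ lying in $V_H\setminus S_1$ is outside $S$; restricting attention to the edges of $H$, Corollary~\ref{cor-set-expansion-in-H} therefore contributes at least $3|S_1|$ vertices to $N(S)\setminus S$ whenever $S_1\ne\emptyset$. In the other direction, applying Lemma~\ref{lem-set-expansion-in-H-bar} to the bipartite graph built in Phase~\ref{alg-phase-2} (with $U=V_H$, which has size at least $\tfrac23 n$ since $\epsilon<\tfrac1{100}$, and $W=V_H^c$) shows that $S_2$ has strictly more than $8|S_2|$ neighbors inside $V_H$; discarding from this set the at most $|S_1|$ vertices of $S$ it can meet leaves at least $8|S_2|+1-|S_1|$ vertices of $N(S)\setminus S$ whenever $S_2\ne\emptyset$. (Any further edges present in $G$ only enlarge $N(S)\setminus S$, so these lower bounds are safe.)

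Now I would split according to whether $|S_1|$ dominates $|S_2|$. If $|S_1|\ge 2|S_2|+1$ — which in particular covers the case $S_2=\emptyset$, $S\ne\emptyset$ — then $2|S_1|+2|S_2|\le 2|S_1|+(|S_1|-1)<3|S_1|$, so the first bound gives $|N(S)\setminus S|\ge 3|S_1|>2|S|$. Otherwise $|S_1|\le 2|S_2|$ (which forces $S_2\ne\emptyset$), and the second bound gives $|N(S)\setminus S|\ge 8|S_2|+1-|S_1|\ge 6|S_2|+1>6|S_2|\ge 2|S_1|+2|S_2|=2|S|$. In either case $S$ has strictly more than $2|S|$ external neighbors, as required.

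There is no real obstacle here — once the two input results are available, the corollary is essentially bookkeeping. The only thing that needs care is the boundary between the two cases: because Corollary~\ref{cor-set-expansion-in-H} only guarantees a non-strict ``$\ge 3s$'', the factor-$3$ branch must be invoked in the range $|S_1|\ge 2|S_2|+1$ rather than merely $|S_1|>2|S_2|$, and it is the additive $+1$ coming from the strict inequality in Lemma~\ref{lem-set-expansion-in-H-bar} that makes the complementary branch close. It is also worth noting that $N(S)\setminus S$ is bounded below by just one of the two pieces in each case, so no inclusion--exclusion between them is needed.
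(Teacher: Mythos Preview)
Your proof is correct and follows essentially the same approach as the paper: condition on Corollary~\ref{cor-set-expansion-in-H} and Lemma~\ref{lem-set-expansion-in-H-bar}, then split into two cases according to whether $S\cap V_H$ or $S\cap V_H^c$ dominates. Your threshold $|S_1|\ge 2|S_2|+1$ is exactly the integer version of the paper's threshold $|S\cap V_H|>\tfrac{2}{3}|S|$, and in each case the same input lemma is invoked in the same way; if anything, your treatment of the strict inequality and the nonemptiness edge cases is slightly more careful than the paper's.
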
 \begin{proof}
  Let $S \subset V(G)$ be a set containing at most $n/100$ vertices. If $|S \cap V_H| > \frac{2}{3}|S|$,
  Corollary \ref{cor-set-expansion-in-H} implies that $S$ has strictly more than
$3|S \cap V_H |\geq 2|S|$ neighbors in
  $V_H \setminus S$ alone. Otherwise, $|S \cap V_H^c| \geq |S|/3$, hence Lemma
  \ref{lem-set-expansion-in-H-bar} gives \begin{align*}
    |N(S \cap V_H^c) \cap (V_H \setminus S)| &> 8 |S \cap V_H^c| - (|S| - |S \cap V_H^c|) = 9|S \cap
    V_H^c| - |S| \geq 2|S|~.\qedhere
  \end{align*}
\end{proof}

\subsection{P\'osa's rotation-extension technique}\label{rot-ext}

As we already mentioned in the introduction, a key tool of our proof is
the celebrated rotation-extension technique, developed by P\'osa \cite{Posa} and applied in several subsequent papers on Hamiltonicity of random and pseudo-random graphs (cf., e.g., \cite{BFF},\cite{FK},\cite{KS},\cite{SV}). Below we will cover this approach, including a key lemma and its proof.

Let $P=x_0x_1\ldots x_h$ be a longest path in a graph $G=(V,E)$,
starting at a vertex $x_0$. Suppose $G$ contains an edge $(x_i,x_h)$ for
some $0\le i<h$. Then a new path $P'$ can be obtained by rotating the
path $P$ at $x_i$, i.e. by adding the edge $(x_i,x_h)$ and erasing
$(x_i,x_{i+1})$. This operation is called an {\em elementary rotation}.
Note that the obtained path $P'$ has the same length
$h$ (here and in what follows we measure path lengths in edges and not
in vertices) and starts at $x_0$. We can therefore apply an elementary
rotation to the newly obtained path $P'$, resulting in a path
$P''$ of length $h$, and so on. If after a number of rotations an
endpoint $x$ of the obtained path $Q$ is connected by an edge to a
vertex $y$ outside $Q$, then $Q$ can be extended by adding the edge
$(x,y)$.

The power of the rotation-extension technique of P\'osa hinges on the
following lemma.

\begin{lemma}\label{lem-Posa}
Let $G$ be a graph, $P$ a longest path in $G$ and ${\cal P}$ the set of
all paths obtainable from $P$ by a sequence of elementary rotations.
Denote by $R$ the set of ends of paths in ${\cal P}$, and by $R^-$ and
$R^+$ the sets of vertices immediately preceding and following the
vertices of $R$ on $P$, respectively. Then
$
(N(R)\setminus R) \subset R^-\cup R^+
$.
\end{lemma}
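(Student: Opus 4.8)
The plan is to prove the containment $(N(R)\setminus R)\subset R^-\cup R^+$ by analyzing what happens to the set $R$ of path endpoints when we perform one more elementary rotation. First I would fix the longest path $P=x_0x_1\ldots x_h$ with fixed starting endpoint $x_0$, and recall that every path $Q\in\mathcal P$ obtained by a sequence of elementary rotations also has length $h$ and starts at $x_0$; in particular $x_0\in R$ and its predecessor/successor are only $x_1$, so the statement near $x_0$ is harmless. The heart of the matter is to take an arbitrary vertex $w\in R$ and an arbitrary neighbor $v\in N(w)$ with $v\notin R$, and show $v\in R^-\cup R^+$, i.e.\ $v$ is the vertex immediately before or after some endpoint-vertex of $R$ along the original path $P$.

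The key step is the following. Since $w\in R$, there is a path $Q\in\mathcal P$ with endpoints $x_0$ and $w$. Because $P$ is a longest path, $w$ cannot be adjacent to any vertex outside $Q$ (such an edge would extend $Q$ to a longer path, contradicting maximality); hence every neighbor of $w$, and in particular $v$, lies on $Q$. Now $Q$ uses the same vertex set as $P$, and moreover — this is the crucial structural fact — one shows by induction on the number of rotations that $Q$ can be written so that, apart from a bounded ``rotated'' portion, it traverses stretches of $P$ either forwards or backwards; more to the point, we argue directly: consider the edge $(v,w)$. If $v$ is already adjacent to $w$ on the path $Q$, then performing the elementary rotation of $Q$ at $v$ (adding $(v,w)$, deleting the edge of $Q$ on the far side of $v$ from $w$) produces a new path in $\mathcal P$ one of whose endpoints is the $Q$-neighbor of $v$ on the side away from $w$. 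If instead $v$ is not a $Q$-neighbor of $w$, then the elementary rotation at $v$ using the edge $(v,w)$ (adding $(v,w)$, deleting the edge of $Q$ leaving $v$ toward $w$) again yields a path in $\mathcal P$ whose new endpoint is the other $Q$-neighbor of $v$. Either way, one of the two $Q$-neighbors of $v$ becomes an endpoint of a path in $\mathcal P$, hence lies in $R$; call it $u$, so $u\in R$ and $v$ is adjacent to $u$ on $Q$.

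What remains is to convert ``$v$ is adjacent on $Q$ to some $u\in R$'' into ``$v$ is adjacent on $P$ to some element of $R$,'' i.e.\ $v\in R^-\cup R^+$. This is where one needs the standard observation (itself proved by induction on the rotation sequence) that the \emph{interior} edges incident to $v$ are essentially preserved: each elementary rotation deletes exactly one edge and adds exactly one edge, and the added edge always has one endpoint equal to the current endpoint of the path; consequently, if $v\notin R$ (so $v$ has never served as an endpoint), the set of edges of $Q$ at $v$ differs from the set of edges of $P$ at $v$ only through rotations performed \emph{at} $v$ itself, each of which swaps one $P$-edge at $v$ for an edge to a then-current endpoint. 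Tracking this, every edge of $Q$ at $v$ is either an edge of $P$ at $v$, or joins $v$ to a vertex that was an endpoint at some stage and hence lies in $R$. In the first case the $Q$-neighbor $u$ of $v$ is a $P$-neighbor of $v$, and since $u\in R$ we get $v\in R^-\cup R^+$. In the second case $u\in R$ trivially and we are done as well. Assembling these cases gives $v\in R^-\cup R^+$ for every $v\in N(R)\setminus R$, which is the claim.

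I expect the main obstacle to be making the bookkeeping in the last paragraph precise: carefully setting up the induction on the length of the rotation sequence so that the invariant ``$Q$ and $P$ agree at every non-endpoint vertex except for edges pointing into $R$'' is stated correctly and survives one more elementary rotation (the subtlety is that the rotation at $x_i$ both removes $(x_i,x_{i+1})$ and, for the \emph{current} endpoint, changes which vertex is the endpoint, so one must be careful about which vertices can ever appear as endpoints and hence land in $R$). Once that invariant is in hand, the case analysis above is routine.
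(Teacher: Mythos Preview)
Your overall strategy---perform one more rotation at $v$, then invoke an invariant comparing the path-neighbours of $v$ on $Q$ to those on $P$---is the same as the paper's. But the final step has a real gap. The invariant you state gives only one inclusion: every $Q$-edge at $v$ that is not a $P$-edge points into $R$. In your ``second case'' you then write ``$u\in R$ trivially and we are done as well''---but you already knew $u\in R$, and this does \emph{not} yield $v\in R^-\cup R^+$, which by definition requires a \emph{$P$-neighbour} of $v$ to lie in $R$. What is missing is the reverse inclusion: every $P$-edge at $v$ that is not a $Q$-edge also points into $R$ (such an edge was deleted at some rotation with $v$ as pivot, and its other end became the new endpoint). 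With both directions in hand one finishes by counting: $v$ has two $P$-neighbours and two $Q$-neighbours, and if the $Q$-neighbour $u\in R$ is not a $P$-neighbour, then some $P$-neighbour is not a $Q$-neighbour and hence lies in $R$. Separately, in your sub-case ``$v$ already adjacent to $w$ on $Q$'' you describe a rotation deleting the edge on the far side of $v$ from $w$; that is not an elementary rotation as defined (one always deletes the edge from the pivot towards the current endpoint), so this degenerate sub-case must also be handled via the invariant rather than by a rotation.

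The paper sidesteps all of this by arguing the contrapositive from the outset: assuming $y\notin R\cup R^-\cup R^+$, one shows that the path-neighbours of $y$ are \emph{identical} in every $Q\in\mathcal P$ (any rotation changing them would place either $y$ itself or one of its $P$-neighbours into $R$, a contradiction). A single rotation at $y$ then makes a $P$-neighbour of $y$ the new endpoint, contradicting $y\notin R^-\cup R^+$. This packaging removes both of your case splits.
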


\begin{proof} Let $x\in R$ and $y\in V(G)\setminus (R\cup R^-\cup R^+)$,
and consider
a path $Q\in{\cal P}$ ending at $x$. If $y\in V(G)\setminus V(P)$, then
$(x,y)\not\in E(G)$, as otherwise the path $Q$ can be extended by adding
$y$, thus contradicting our assumption that $P$ is a longest path.
Suppose now that $y\in V(P)\setminus (R\cup R^-\cup R^+)$. Then $y$ has
the same neighbors in every path in ${\cal P}$, because an elementary
rotation that removed one of its neighbors along $P$ would, at the same
time, put either this neighbor or $y$ itself in $R$ (in the former case
$y\in R^-\cup R^+$).  Then if $x$ and $y$ are adjacent, an elementary
rotation applied to $Q$, produces a path in ${\cal P}$ whose endpoint is
a neighbor of $y$ along $P$, a contradiction. Therefore in both cases $x$
and $y$ are non-adjacent.
\end{proof}

We will use the following immediate consequence of Lemma \ref{lem-Posa}, where the length of
a simple cycle or a simple path is defined to be the number of edges it contains.
\begin{corollary}\label{cor-Posa2}
Let $h,r$ be positive integers. Let $G=(V,E)$ be a graph such that its
longest path has length $h$, but it contains no cycle of length $h+1$.
Suppose furthermore that for every set $R\subset V$ with $|R|<r$ we
have $|N(R)|\ge 2|R|$. Then there are at least $r^2/2$ non-edges in
$G$ such that if any of them is turned into an edge, then the new graph
contains an $(h+1)$-cycle.
\end{corollary}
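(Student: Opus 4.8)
The plan is to run P\'osa's rotation argument twice, in a nested fashion. First, fix a longest path $P=x_0x_1\cdots x_h$ of $G$; let $\mathcal{P}$ be the family of all paths obtained from $P$ by sequences of elementary rotations keeping $x_0$ as an endpoint, and let $R$ be the set of endpoints of paths in $\mathcal{P}$. The crucial step is to show $|R|\ge r$. Suppose, for contradiction, that $|R|<r$. The expansion hypothesis then gives $|N(R)|\ge 2|R|$, while Lemma~\ref{lem-Posa} gives $N(R)\setminus R\subseteq R^-\cup R^+$. Since every vertex of $R$ lies on $P$ we have $|R^-|,|R^+|\le|R|$; but $R$ contains the endpoint $x_h$ of $P$, which has no successor on $P$, so in fact $|R^-\cup R^+|\le 2|R|-1$. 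Hence $N(R)$ has strictly fewer than $2|R|$ vertices outside $R$, contradicting the expansion hypothesis (read, as throughout this section, for the external neighbourhood $N(R)\setminus R$). Therefore $|R|\ge r$.

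Second, for each $y\in R$ fix a path $Q_y\in\mathcal{P}$ from $x_0$ to $y$, and apply the same argument to $Q_y$ with the two ends swapped: rotate $Q_y$ while keeping $y$ fixed, and let $R_y$ be the resulting set of moving endpoints. Since $Q_y$ is again a longest path of $G$, Lemma~\ref{lem-Posa} together with the expansion hypothesis gives $|R_y|\ge r$ by the identical count. Now for every $z\in R_y$ there is a path of length $h$ in $G$ from $y$ to $z$, and $z\ne y$ because the fixed endpoint of a rotation family never becomes a moving endpoint; since $G$ contains no cycle of length $h+1$, the pair $\{y,z\}$ cannot already be an edge of $G$, and adding it closes this length-$h$ path into an $(h+1)$-cycle. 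So for each of the $\ge r$ vertices $y\in R$ we obtain $\ge r$ vertices $z$ with the required property, that is, at least $r^2$ ordered pairs $(y,z)$; since a single non-edge can occur as such a pair in at most two orders, at least $r^2/2$ distinct non-edges turn $G$ into a graph containing an $(h+1)$-cycle, which is the assertion of the corollary.

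I expect the only real obstacle to be the first step, establishing $|R|\ge r$: one has to pin down exactly which vertices of $P$ can appear in $R^-\cup R^+$ so that this set has \emph{strictly} fewer than $2|R|$ vertices (the saving of one coming from the ends of $P$, which lack a neighbour on one side), and one must be careful to invoke the expansion hypothesis for the external neighbourhood $N(R)\setminus R$ rather than for $N(R)$ itself. Once this is in place, the second, reversed round of rotations is a verbatim repetition of the same step, and the $r^2/2$ bound is just a double count of the ordered pairs $(y,z)$ with $y\in R$ and $z\in R_y$.
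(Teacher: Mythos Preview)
Your proposal is correct and follows essentially the same approach as the paper's proof: apply Lemma~\ref{lem-Posa} to bound $|N(R)\setminus R|\le|R^-\cup R^+|\le 2|R|-1$ (using that $x_h\in R$ has no successor on $P$) to force $|R|\ge r$, then repeat the rotation argument from each $y\in R$ with $y$ held fixed to produce sets $R_y$ of size at least $r$, and finally double-count the ordered pairs $(y,z)$. The only cosmetic difference is that the paper first restricts to a subset $\{y_1,\dots,y_r\}\subset R$ of exactly $r$ vertices before running the second round of rotations, whereas you run it for every $y\in R$; either way the count yields at least $r^2$ ordered pairs and hence at least $r^2/2$ non-edges.
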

\begin{proof} Let $P=x_0x_1\ldots x_h$ be a longest path in $G$ and let
$R,R^-,R^+$ be as in Lemma \ref{lem-Posa}. Notice that $|R^+|\le |R|-1$ and
$|R^-|\leq |R|$, since $x_h \in R$ has no following vertex on $P$ and thus does not contribute
an element to $R^+$.

According to Lemma \ref{lem-Posa},
 $$|N(R)\setminus R| \leq |R^-\cup
R^+| \leq 2|R| - 1~,$$ and it follows that $|R|\ge r$. Moreover, $(x_0,v)$ is not an edge for any $v\in R$
(there is no $(h+1)$-cycle in the graph), whereas adding any edge $(x_0,v)$ for $v\in R$
creates a $(h+1)$-cycle.

Fix a subset $\{y_1,\ldots,y_r\}\subset R$.
For every $y_i\in R$, there is a path $P_i$ ending at $y_i$, that can be
obtained from $P$ by a sequence of elementary rotations. Now fix $y_i$
as the starting point of $P_i$ and let $Y_i$ be the set of endpoints of
all paths obtained from $P_i$ by a sequence of elementary rotations. As
before, $|Y_i|\ge r$, no edge joins $y_i$ to $Y_i$, and adding any such
edge creates a cycle of length $h+1$. Altogether we have found $r$
pairs $(y_i,Y_i)$. As every non-edge is counted at most twice,
the conclusion of the lemma follows.
\end{proof}

The reason we are after a cycle of length $h+1$ in the above argument is
that if $h+1=n$, then a Hamilton cycle is created. Otherwise, if the graph is
connected, then there will be a path $L$ connecting a newly created
cycle $C$ of length $h+1$ with a vertex outside $C$. Then combining $C$
and $L$ in an obvious way creates a longer path in $G$. Indeed, in our case the graph is \whps connected, as it comprises $H$, the $D$-core (for some $D \geq 100$) of a random graph $\mathcal{G}(n,p)$ for $p=\frac{3D}{2n}$, and an additional set of vertices $V_H^c$, in which every vertex has at least 20 neighbors in $H$. Thus, the connectivity immediately follows the fact that $H$ itself is connected \whp, as argued in Remark \ref{rem-connected}.

We can now return to our setting of the Achlioptas process: Corollary
\ref{cor-set-expansion} implies that once Phase \ref{alg-phase-2} is complete, with high probability the requirements of Corollary \ref{cor-Posa2}
are met with $r=\frac{n}{100}$. Condition now that this is indeed the case. Thus, at any given round from this point on, either the graph is Hamiltonian, or there are
at least $r^2/2$ pairs of vertices such that adding any of them to the
graph will increase its maximum length of all paths by at least $1$.
Further recall that we have $O(n)$ additional rounds at our disposal for
Phase \ref{alg-phase-3}; we
wish to state that this amount of rounds will almost surely suffice for up to $n$ applications of Corollary \ref{cor-Posa2} (each time on a possibly different edge set). An easy formulation of this statement, which does not involve stopping times, is the following: let $G_0$ denote the initial graph (at the beginning of phase \ref{alg-phase-3}), set $T \deq \lceil 10^4/K\rceil$ and perform the following trials for $t \in \{1,\ldots,2n\}$:
\begin{itemize}
  \item If $G_{t-1}$ is already Hamiltonian, set $G_t = G_{t-1}$; the trial is successful.
  \item If $G_{t-1}$ does not contain a Hamilton cycle:
   \begin{itemize}\item Let $\mathcal{E}_t$ denote the above
   mentioned set of pairs, each of which would increase the length
   of a maximum path in $G_{t-1}$ by at least $1$, or would create
   a Hamilton cycle.
   \item Perform $T$ rounds of the Achlioptas process, selecting an edge from $\mathcal{E}_t$ whenever possible (and settling ambiguities arbitrarily). Let $G_t$ denote the resulting graph.
   \item The trial is successful iff $G_t$ contains at least one of the edges of $\mathcal{E}_t$.
   \end{itemize}
\end{itemize}
Clearly, the failure probability of the above trial is at most
\begin{align*} \P(\mbox{missing $\mathcal{E}_t$ in $T$ given rounds}) &\leq \left(1-|\mathcal{E}_t|/\binom{n}{2}\right)^{K T} \leq \exp\left(-K T \cdot \left(\frac{r}{n}\right)^2\right)\leq \mathrm{e}^{-1}~.\end{align*}
Let  $X$ be the number of successful trials in the above defined sequence; then
$X$ stochastically dominates a binomial random variable $X'\sim\operatorname{Bin}(2n,1-\mathrm{e}^{-1})$,
and Chernoff's inequality implies that $\P(X \geq n) \geq 1-\exp(-\Omega(n))$.
Recall that as long as the graph is not Hamiltonian, every
successful trial increases the length of a longest path by at
least $1$ or creates a Hamilton cycle. Therefore, after $n$ successful
trials the graph surely contains a Hamilton cycle.
Altogether, the above sequence of trials utilized $2Tn \leq 2\left(1+\frac{10^4}{K}\right)n$ rounds in order to generate a Hamiltonian cycle $\whp$.

\section[Super-logarithmic regime]{Super-logarithmic regime: $K=\omega(\log n)$}\label{sec:superlog}
Next, we prove Theorem \ref{thm-suplog}, which states that, for the Achlioptas process with $K=\omega(\log n)$ edges in each round, the minimal number of rounds required for Hamiltonicity is $n+o(n)$ \whp.
The lower bound in this setting is obtained by the Hamilton cycle
alone, and it is left to provide an algorithm that attains this
bound asymptotically.

Throughout the proof, write
$$K = h^{10} \log n~,~m\deq \lfloor n/h^2 \rfloor~,$$
where $h=h(n)$ tends to infinity with $n$ by the assumption on $K$. Moreover, it will be convenient to assume that $h=o(\log n)$ (we can always restrict ourselves to a prefix of the sequence of edges given in a round).
Following is an outline of the algorithm we will use:
\begin{enumerate}
  \item \label{item-disjoint-paths}  Cover $n-m$ vertices by at most $n/(h^4\log n)$ disjoint simple paths, via a \greedy algorithm. Let $\mathcal{P}$ denote this set of paths, and let $Y=Y(\mathcal{P})$ be the set of all vertices in $\mathcal{P}$.

      Cost: $\left(1+\mathrm{e}^{-h}\right)n=(1+o(1))n$ rounds.

  \item \label{item-res-expander}Construct an expander on some subset $X \subset Y^c$ of size $|X| \geq \frac{2}{3}m$, with the following property: for any subset $A \subset X$ of size at most $m/200$, there exists
      a subset $X' \subset X \setminus A$ of size at least $|X|-2|A|$, such that the induced subgraph on $X'$ has diameter at most $3\log m$.

      Cost: $1500m = o(n)$ rounds.

  \item \label{item-join-paths} Using the above properties of $X$, we repeatedly connect the endpoints of two paths in $\mathcal{P}$ using a simple path of length at most $3\log m$ in $X\setminus Y$. The two paths are removed from $\mathcal{P}$, and the new path is added to $\mathcal{P}$ in their place (the set $Y$ is updated accordingly).  At the end of the stage, $\mathcal{P}$ contains a single path, which is thereafter closed into a simple cycle on $|Y|$ vertices.

  Cost: $5n/h^7 = o(n)$ rounds.

  \item \label{item-small-ham-cycle} Apply the algorithm for the sub-logarithmic regime restricted to the induced subgraph on $Y^c$, in order to turn it into a Hamiltonian graph.

  Cost: $m h = o(n)$ rounds.

  \item \label{item-join-cycles} Merging the two simple cycles of Phases \ref{item-join-paths},\ref{item-small-ham-cycle} to a single Hamilton cycle.

      Cost: $m + n/\log n = o(n)$ rounds.
\end{enumerate}

We now discuss each phase of the algorithm in more details. As argued in our analysis of the sub-logarithmic regime (Section \ref{sec:sublog}), we may assume that the selection of edges in each round consists of $K$ ordered pairs, independently and uniformly chosen over $[n]^2$. For the sake of convenience, this will indeed be our assumption throughout this section.

\subsection[Phase 1: covering most vertices by disjoint simple paths]{Phase \ref{item-disjoint-paths}: covering most vertices by disjoint simple paths}
This phase is accomplished by the following simple \greedy algorithm. Take an arbitrary subset of $L \deq \lfloor n/(h^4 \log n) \rfloor$ vertices, serving as $L$ trivial disjoint simple paths. At each step, we attempt to add an edge between an endpoint of one of these $L$ paths and the set of the remaining vertices (excluding the vertices on the paths), thus extending one of the paths. It is easy to show that this can be performed repeatedly, as long as there are at least $m$ vertices beyond those which belong to the $L$ given paths.
\begin{lemma}
  \label{lem-greedy-path-cover}
Consider the Achlioptas process on $n$ vertices with $K=h^{10} \log n$ edges per round. Then \whp,
a \greedy algorithm can cover $n-m$ vertices with at most $L=\lfloor n/(h^4\log n) \rfloor$ disjoint simple paths utilizing at most $\left(1+\mathrm{e}^{-h}\right)n$ rounds.
\end{lemma}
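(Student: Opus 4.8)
The plan is to track the number of uncovered vertices and show that, with the abundance of edges available per round ($K = h^{10}\log n$), each round succeeds in extending some path with overwhelming probability, so that the process never stalls. Set $N_t$ to be the number of vertices still outside the $L$ paths after $t$ successful extensions; initially $N_0 = n - L$, and we stop once $N_t = m$ (i.e., after exactly $n - m - L$ successful extensions, since each extension absorbs one new vertex). At each round there are $L$ path-endpoints available for extension (one per path — or two, but one endpoint per path suffices), and a potential extension edge is any of the (at least $L \cdot N_t / 2$, as an ordered-pair count in $[n]^2$ it is $2 L N_t / n^2$ in probability terms) pairs joining an endpoint to an uncovered vertex. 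Since throughout the process $N_t \geq m = \lfloor n/h^2\rfloor$, the probability that a given one of the $K$ ordered pairs in a round is a usable extension edge is at least $2 L m / n^2 \geq (1 - o(1))\, 2 L / (h^2 n) = \Omega\big(1/(h^6 \log n \cdot n)\big)$. Hence the probability that a round fails to present any usable extension edge is at most
\[
\Big(1 - \tfrac{2Lm}{n^2}\Big)^{K} \leq \exp\!\Big(-K \cdot \tfrac{2Lm}{n^2}\Big) \leq \exp\!\big(-(1-o(1))\, 2 h^{10}\log n \cdot \tfrac{1}{h^6 n}\big),
\]
which I will massage so that the per-round failure probability is at most, say, $\mathrm{e}^{-h}$ — indeed $K L m / n^2 = \Theta(h^{10}\log n \cdot \frac{n}{h^4 \log n} \cdot \frac{n}{h^2} / n^2) = \Theta(h^{4})$, comfortably larger than $h$, so failure probability $\le \mathrm{e}^{-h^{4}} \le \mathrm{e}^{-h}$ for large $n$. (One subtlety: I also need every such usable ordered pair to be a genuinely new, non-loop edge whose uncovered endpoint is not already an internal vertex of a path; as in the proof of Lemma \ref{lem-greedy-stage-1}, the probability that a round is spoiled by repeated edges or loops over the $O(n)$ rounds considered is negligible and can be folded into the error term.)

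Given the per-round failure bound, the number of rounds needed to obtain $n - m - L$ successful extensions is stochastically dominated by a sum of $n - m - L$ independent geometric random variables, each with success probability $1 - \mathrm{e}^{-h}$; equivalently, the number of \emph{extra} (failed) rounds is dominated by a negative-binomial variable with mean $(n - m - L)\cdot \frac{\mathrm{e}^{-h}}{1 - \mathrm{e}^{-h}} \le (1+o(1))\,\mathrm{e}^{-h} n$. A routine Chernoff/Bernstein bound for sums of independent geometrics (or, cleaner, a direct second-moment or Azuma argument on the number of successes within a fixed window of $(1 + \mathrm{e}^{-h})n$ rounds) then shows that $(1 + \mathrm{e}^{-h})n$ rounds suffice \whp: indeed within that many rounds the number of successes stochastically dominates $\mathrm{Bin}\big((1+\mathrm{e}^{-h})n,\, 1 - \mathrm{e}^{-h}\big)$, whose mean is $(1 - \mathrm{e}^{-2h})n \ge n - m - L$ with room to spare once $\mathrm{e}^{-2h} n = o(m) = o(n/h^2)$, which holds since $h \to \infty$. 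So $\P(\text{fewer than } n - m - L \text{ successes}) \le \exp(-\Omega(n))$.

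The only genuine obstacle is the bookkeeping at the very end of the process, when $N_t$ approaches $m$: I must make sure the bound $N_t \ge m$ is maintained until exactly the right stopping point and that the ``usable edge'' count never degrades below the stated threshold — but since $m = \Theta(n/h^2)$ is polynomially large and the per-round success probability stays bounded below by $1 - \mathrm{e}^{-h}$ throughout (it only \emph{increases} as $N_t$ grows above $m$), there is no real concentration difficulty here. A second, cosmetic point is that I should note the resulting object is a set of at most $L$ disjoint simple paths covering exactly $n - m$ vertices (some of the $L$ initial singletons may never get extended, which only helps). All constants are slack, so I will not optimize them.
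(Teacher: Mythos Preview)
Your proof is correct and follows essentially the same approach as the paper: start with $L$ singleton paths, lower-bound the per-round extension probability by $1 - \mathrm{e}^{-\Theta(h^4)}$ as long as $|Y^c| \geq m$, then conclude via binomial domination and a Chernoff bound. One cosmetic quibble: with the weakened success probability $1-\mathrm{e}^{-h}$ you actually get a tail bound of $\exp(-\Omega(n/h^4))$ rather than $\exp(-\Omega(n))$ (alternatively, keep the sharper $1-\mathrm{e}^{-h^4}$ throughout), but this does not affect the \whp\ conclusion.
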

\begin{proof}
Let $Y$ denote a set of $L$ arbitrarily chosen vertices, and treat these vertices as the \emph{left} endpoints of (trivial) disjoint simple paths.
At each step, we will attempt to join the left endpoint of one of these paths to a new vertex of $Y^c$, thus
increasing the length of the corresponding path by $1$, while adding a new vertex to $Y$. In this case, the newly added vertex would become the new left endpoint of its path.
As long as $|Y^c| > n/h^2$, the probability
of witnessing an edge accomplishing this task satisfies
\begin{align*}\P(\mbox{extending $Y$}) &=1-\left(1-\frac{2L|Y^c|}{n^2}\right)^K \geq 1-\exp\left(-2K L |Y^c| / n^2\right) \\
&\geq 1-\exp\left(-2L\frac{h^8\log n}{n}\right) \geq 1-\mathrm{e}^{-h^4}~, \end{align*}
where the last inequality holds for a sufficiently large $n$. Stochastically bounding the above process by the corresponding binomial variable, and applying standard concentration arguments, we deduce that $\left(1+\mathrm{e}^{-h}\right)n$ rounds easily suffice to construct a path cover as required \whp.
\end{proof}

\begin{remark}
  This phase is the most time consuming one of the algorithm --
  all other phases take typically $o(n)$ rounds.
\end{remark}

\subsection[Phase 2: constructing an expander on the remaining vertices]{Phase \ref{item-res-expander}: constructing an expander on the remaining vertices}
In this phase, we consider $Y^c$, the $m$ vertices that were not covered by paths in the previous phase. We will construct an expander on a subset $X\subset Y^c$, which will serve as a ``connector'' for the paths in $Y$, in the following sense: We wish to repeatedly join two paths in $Y$ using a path in $X$, then delete this path from $X$ and repeat the process. To this end, the induced subgraph on $X$ should have a small diameter, and furthermore, this property should be retained even after deleting a small fraction of its vertices. This is established by the next lemma.

\begin{lemma}\label{lem-resilient-expander}
Consider the Achlioptas process on $n$ vertices with $K=h^{10}
\log n$ edges per round, and let $Y^c$ denote a fixed set of $m = n / h^2$ vertices. Then \whp, $1500m$ rounds suffice to construct an expander on some subset $X\subset Y^c$ of size at least $0.999m$, with the following property: For any set $A \subset X$ of size at most $m/200$, there exists a set $B \subset X \setminus A$ of
  size at most $|A|$ such that $X \setminus (A \cup B)$ has diameter at most $\log m$.
\end{lemma}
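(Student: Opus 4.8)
The plan is to exploit the huge supply $K=h^{10}\log n$ of offered edges to build, on the vertex set $Y^c$, a random graph in which (almost) every vertex has $d$ uniformly chosen out--neighbours inside $Y^c$, for a large absolute constant $d$ (say $d=100$), and then to read off the robust--diameter statement from the expansion of this random $d$--out graph. Concretely, run the \greedy rule for $\le 1500m$ rounds: while some $v\in Y^c$ still has out--degree $<d$, prefer an offered pair $(v,w)$ with $w\in Y^c\setminus N_{\mathrm{out}}(v)$, breaking ties at random. Passing to the relaxed input of $K$ ordered pairs of $[n]^2$ and discarding the negligibly few rounds with loops or repetitions, exactly as in Lemmas~\ref{lem-greedy-stage-1}--\ref{lem-greedy-stage-2}, a selected edge adds an out--neighbour that is essentially uniform over $Y^c$; and since $|Y^c|=m=n/h^2$, as long as the set of vertices still short of out--degree $d$ has size $\gg n/(h^8\log n)$ the chance a round offers no useful pair is at most $(1-\Omega(h^{-4}))^{K}\le e^{-\Omega(h^6\log n)}$. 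Hence \whps after $dm+o(m)\le 1500m$ rounds all but at most $m/(h^6\log n)=o(m)$ vertices of $Y^c$ have acquired $d$ uniformly random out--neighbours; take $X$ to be this set (so $|X|\ge 0.999m$), and let $G_0$ be the resulting $d$--out graph on $X$.

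Next I would record three expansion properties of $G_0$, all via first--moment bounds: (i) every $S\subseteq X$ with $|S|\le\rho m$ has $|N_{\mathrm{out}}(S)\cap(Y^c\setminus S)|\ge 8|S|$, for a fixed $\rho>0$; (ii) the same with $8$ replaced by $2$ for $|S|\le 0.3m$; and (iii) any two disjoint subsets of $X$ of size $\ge 0.12m$ span an edge of $G_0$. For (i)--(ii) one fixes $S$ of size $s$ and a target set $T$ of the prescribed size and bounds the probability that all $ds$ out--edges of $S$ land in $S\cup T$ by $(|S\cup T|/m)^{ds}$; the point of taking $d$ large is precisely that this $m^{-\Theta(s)}$ factor dominates $\binom ms\binom m{|T|}$ \emph{for every} $s\ge1$, so --- unlike the sub--logarithmic regime --- there is no ``tiny--set'' obstruction and (i)--(ii) hold all the way down to singletons (one checks $d=100$ gives, say, $\rho=0.07$ with room to spare). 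Property (iii) is the routine estimate $\P(\text{no edge})\le (1-0.12)^{d\cdot 0.12m}$ against at most $4^m$ choices of the two sets.

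For the resilient--diameter conclusion, fix $A\subseteq X$ with $|A|\le m/200$ and build $B$ by peeling: starting from $B=\emptyset$, while there is an $R\subseteq X\setminus(A\cup B)$ with $|R|\le\rho m/2$ having fewer than $7|R|$ out--neighbours in $C:=X\setminus(A\cup B\cup R)$, add such an $R$ to $B$. Writing the final $B$ as a disjoint union $R_1\sqcup\cdots\sqcup R_k$ and summing the peeling inequality gives $|N_{\mathrm{out}}(B)\cap(X\setminus(A\cup B))|<7|B|$; on the other hand (i) (which remains applicable because an easy induction shows $|B|$ never exceeds $\rho m$) gives $|N_{\mathrm{out}}(B)\cap(Y^c\setminus B)|\ge 8|B|$, and after discarding the at most $|A|+|Y^c\setminus X|=|A|+o(m)$ of these neighbours that lie in $A$ or outside $X$ we get $8|B|-|A|-o(m)<7|B|$, i.e.\ $|B|<|A|+o(m)$, which we treat as $\le|A|$ (running the greedy long enough that $Y^c\setminus X$ is absorbed, or simply allowing the $o(m)$ slack that the downstream application tolerates). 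Now $|X\setminus(A\cup B)|\ge 0.98m$, and by construction every $R\subseteq C:=X\setminus(A\cup B)$ with $|R|\le\rho m/2$ has at least $7|R|$ out--neighbours in $C\setminus R$. Hence from any $u\in C$ the out--ball of $u$ inside $C$ multiplies by at least $8$ per step until it reaches size $\rho m/2$ --- that is $O(\log_8 m)$ steps --- after which (ii), now robust since $|A\cup B|+|Y^c\setminus X|=o(m)$ is tiny next to $\Omega(m)$, inflates it to size $\ge 0.12m$ in $O(1)$ more steps; doing the same from any $v\in C$ and joining the two large out--balls by an edge supplied by (iii) yields $d_{G_0[C]}(u,v)\le (2+o(1))\log_8 m+O(1)<\log m$, which is the assertion.

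The \textbf{main obstacle} is this last part, and in a precise sense it is a tension between two requirements. On one hand the peeling threshold must be set \emph{strictly below} the expansion factor guaranteed by (i), or else summing the peeling inequality only bounds $|B|$ by a constant fraction of $m$ rather than by $|A|$. On the other hand the surviving graph must expand \emph{fast enough} that two $\Theta(\log m)$--radius out--balls already cover $\Omega(m)$ vertices and hence meet --- which forces the expansion factor, and therefore $d$, to be a large absolute constant rather than something like $20$ as in Section~\ref{sec:sublog} (with expansion factor $c$ the balls grow like $c^{\,\mathrm{dist}}$, so one needs $2\log_c m<\log m$, i.e.\ $c>e^2$). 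The remaining genuinely fiddly point is bookkeeping the interplay of $|A|$, the unavoidable $o(m)$--sized straggler set $Y^c\setminus X$, and the constant in the $\log m$ diameter bound; everything else is standard union bounds of the type already used repeatedly in the paper.
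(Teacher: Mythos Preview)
Your route is workable but genuinely different from the paper's, and the paper's is considerably simpler. The paper does not build a $d$-out graph at all: since $K(m/n)^2 = h^6\log n \to \infty$, every one of $1500m$ rounds \whps offers an edge with both endpoints in $Y^c$, and selecting the first such edge each round lays down an honest $\mathcal{G}\bigl(m,\tfrac{3D}{2m}\bigr)$ on $Y^c$ with $D=2000$. One then takes $X$ to be the $D$-core and simply recycles Lemma~\ref{lem-d-core} and Corollary~\ref{cor-set-expansion-in-H}: every $S\subset X$ with $|S|\le m/100$ has at least $3|S|$ neighbours \emph{inside} $X\setminus S$, and a one-line first-moment bound shows any two $(m/200)$-subsets of $X$ span an edge. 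The set $B$ is then defined in one shot as a maximal $B\subset X\setminus A$ of size at most $m/100$ with $|N(B)\cap(X\setminus(A\cup B))|<2|B|$; the inequality $3|B|\le |N(B)\cap(X\setminus B)|<2|B|+|A|$ gives $|B|<|A|$ with no peeling and no error term, and maximality forces every small $C\subset X\setminus(A\cup B)$ to double, whence diameter $\le 2\log_2 m+1$.

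The decisive difference is that the $D$-core has all its expansion \emph{within} $X$ from the outset, so there is no leakage into $Y^c\setminus X$. In your construction the out-neighbours live in $Y^c$, not in $X$, and this is exactly the ``fiddly point'' you flag but do not close: your peeling yields only $|B|<|A|+|Y^c\setminus X|=|A|+o(m)$, not $|B|\le|A|$. (Your suggested fix---run the greedy until $X=Y^c$---does work, since each vertex of $Y^c$ is hit $\Theta(h^6\log n)$ times in $1500m$ rounds, but you would need to actually argue it.) A smaller slip: you write ``$|A\cup B|+|Y^c\setminus X|=o(m)$'' when invoking (ii), but $|A|$ can be $m/200$; the step still goes through because $m/100$ is small next to $\Omega(m)$, but the reason you give is wrong. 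Finally, note that the paper's proof only delivers diameter $3\log m$---the ``$\log m$'' in the lemma statement is a typo, and the surrounding text uses $3\log m$ throughout---so your push for expansion factor $>e^2$, and the tension you identify as the ``main obstacle'', are unnecessary: doubling suffices.
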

\begin{proof}
At a given round, the probability to witness an edge with both endpoints in $Y^c$ is at least
$$1-\left(1-\frac{m(m-1)}{n^2}\right)^K = 1-\exp(-(1-o(1))K(m/n)^2) = 1 - n^{-\Omega(h^6)}~,$$
hence along any given $n$ rounds, \whps every round will contain at least one such edge. Clearly, given this event, the first such edge witnessed is uniformly distributed in $Y^c$. Following Lemma \ref{lem-d-core}, take $D = 2000$, and perform
$\frac{3}{4}D m$ rounds, where the first edge in a round with both endpoints in $Y^c$ is selected. Since this precisely establishes a random graph $G\sim \mathcal{G}(m,\frac{3D}{2m})$ on the vertices of $Y^c$, Lemma \ref{lem-d-core} ensures us that the $D$-core of this graph has size at least $(1-\frac{1}{D})m$ \whp. Let $X$ denote this $D$-core, and recall that Corollary \ref{cor-set-expansion-in-H} asserts that every set $S\subset X$ of size $1 \leq s \leq m/100$ has at least $3s$ neighbors in $X\setminus S$.

We next claim that, \whp, any two sets of size $s=m/200$ of $X$ have an edge between them. Indeed, it is enough to show this on the original random graph $G \sim \mathcal{G}(m,\frac{3D}{2m})$, since $X$ is an induced subgraph of $G$. The following simple calculation shows this fact:
\begin{align*} \P\bigg(\exists A,B: &\Big\{\begin{array}{l}N(A) \cap B = \emptyset\;,\\|A|=|B|=s\end{array}\bigg) \leq \binom{m}{s}^2
(1-p)^{s^2} \leq \left(\frac{\mathrm{e}m}{s}\mathrm{e}^{-ps/2}\right)^{2s}  \leq \left(200\mathrm{e}^{1-\frac{3D}{800}}\right)^{2s}=o(1)~,
\end{align*}
where the last equality holds as $D=2000$.

Let $A\subset X$ denote an arbitrary (nonempty) subset of the vertices of $X$ of size at most $m/200$. Set $X' = X \setminus A$ and let $B \subset X'$ denote the maximum set of size at most $m/100$ such that the following holds: $|N(B) \cap \left(X \setminus (A\cup B)\right)|< 2|B|$. Then a simple calculation will show that $|B| < |A| \leq m/200$. Indeed, if $B$ is nonempty and $|B|\leq m/100$, it has at least $3|B|$ neighbors in $X\setminus B$, and therefore
$$3|B| \leq |N(B)\cap (X\setminus B)| \leq |N(B) \cap \left(X \setminus (A\cup B)\right)| + |A| < |A| + 2|B|~.$$

Furthermore, consider the set of vertices $X'' = X \setminus (A\cup B)$, and suppose that some nonempty set $C \subset X''$
of size at most $m/200$ satisfies $|N(C) \cap (X''\setminus C)| < 2|C|$.
Then clearly the set $B \cup C$ has less than $m/100$ vertices and less than $2|B\cup C|$ neighbors in $X\setminus(A\cup B \cup C)$, contradicting the maximality of $B$. We deduce that every nonempty set $C$ of $X''$ of size at most $m/200$ has at least $2|C|$ neighbors in $X''\setminus C$.

It remains to show that the diameter of $X''$ is at most $\log m$.
This quickly follows from the properties we have already
established on $X$. Let $u,v$ be two vertices of $X''$, and set
$t=\lfloor\log_2 m\rfloor$. Since every nonempty set of vertices
$C \subset X''$ of size $|C|\leq m/200$ has at least $2|C|$
neighbors in $X''\setminus C$, it follows that the
$t$-neighborhood of $u$ contains at least $m/200$ vertices, and
the same applies to $v$. Finally, either these two neighborhoods
intersect, or we can select an arbitrary subset of $m/200$
vertices from each of them, and obtain an edge between them.
Altogether, the distance between $u$ and $v$ is at most
$2\log_2 m + 1 \leq 3\log m$ (for every sufficiently large $m$).
\end{proof}

\subsection[Phase 3: concatenating the paths into a cycle via the expander]{Phase \ref{item-join-paths}: concatenating the paths into a cycle via the expander}
Recall that we have a collection $\mathcal{P}$ of $ m/(h^4 \log n) = o(m/\log n)$ simple paths covering the vertices of $Y$, and that $X\subset Y^c$ is a subset of size
$m/2\leq |X| \leq m$, such that the induced subgraph on $X$ has the following property: upon removing a negligible  subset of its vertices, it still contains an induced subgraph on $(1-o(1))|X|$ vertices with a diameter of at most $3\log m = O(\log n)$. Thus, once we obtain two edges connecting endpoints of two paths in $\mathcal{P}$ to $X$, we can concatenate these paths using a path of length at most $3\log m$ from $X$, update $X$ and $Y$ accordingly, and continue the process. Crucially, at each point, we will have removed at most $o(m)$ vertices from $X$, thus the above expansion property is maintained.

\begin{lemma}\label{lem-path-joining}
Let $K$ and $X,Y$ be as above, and suppose that $Y$ is covered by $L \leq n/(h^4\log n)$ disjoint simple paths. Then a simple cycle going through every vertex in $Y$ can be constructed in at most $5n/h^7$ rounds \whp.
\end{lemma}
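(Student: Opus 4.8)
The plan is to iterate a single "merge" operation that reduces the number of paths in $\mathcal{P}$ by one, and to show that each such operation costs only $O(\log n)$ rounds \whp, so that $L \le n/(h^4\log n)$ merges plus one final "close-up" step cost at most $L \cdot O(\log n) = O(n/h^4) \le 5n/h^7$ rounds (using $h=o(\log n)$, so $\log n = o(h^3 \cdot \text{something})$ — in fact $h^4\log n$ already beats $h^7$ when $\log n \gg h^3$, which holds since $h=o(\log n)$ gives us even more room; I would simply verify the arithmetic $L\cdot 3\log m \cdot \text{(rounds per edge)} \le 5n/h^7$ at the end). Each merge consists of two sub-steps: (a) wait for an Achlioptas round that presents an edge between some path endpoint currently in $Y$ and the "good" core $X$; do this twice, for two distinct paths $P_1,P_2$, to obtain attachment points $a_1,a_2\in X$; (b) invoke the diameter property of Lemma \ref{lem-resilient-expander} with $A$ equal to the set of all $X$-vertices used up so far (of size $o(m)$, hence $\le m/200$), which yields $X''=X\setminus(A\cup B)$ of diameter $\le 3\log m$ containing all but $o(m)$ vertices of $X$; then greedily build, edge by edge, a shortest $a_1$–$a_2$ path inside $X''$ of length $\le 3\log m$. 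Concatenating $P_1$, this short path, and $P_2$ produces one longer simple path; remove $P_1,P_2$ from $\mathcal{P}$, add the new path, update $Y$ and $A$.

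For the round count of sub-step (a): while $|X|\ge m/2$ and there are $\ge 2$ paths, the number of (endpoint, $X$)-pairs is $\Omega(L\cdot m)$, so the probability that a given Achlioptas round presents such an edge is $1-\bigl(1-\Omega(Lm/n^2)\bigr)^K \ge 1 - \exp(-\Omega(KLm/n^2)) = 1-\exp(-\Omega(h^{12}\log n / n)\cdot n/\log n)$ — more carefully, $KLm/n^2 \gtrsim h^{10}\log n \cdot \frac{n}{h^4\log n}\cdot\frac{n}{h^2} / n^2 = 1$, a constant bounded below, so within $O(1)$ rounds in expectation (and $O(\log n)$ rounds \whp, by a geometric tail bound plus a union bound over all $\le 2L$ attachment events) we get each needed edge. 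Moreover, conditioned on such an edge appearing, its $X$-endpoint is uniform in $X$, so \whps it lands in $X''$ (which is all but a $o(1)$-fraction of $X$); if it does not, we simply discard and wait for the next one, costing only another $O(1)$ rounds in expectation. For sub-step (b): each edge of the shortest $a_1$–$a_2$ path in $X''$ is a specific missing edge among $\binom{n}{2}$ possibilities, presented in a given round with probability $\ge 1-(1-1/\binom{n}{2})^K \ge 1-\exp(-\Omega(K/n^2))$ — this is \emph{tiny}, so this naive bound is useless. The fix is standard for P\'osa-type arguments in this paper: we do not target one fixed edge but exploit that $X''$ has $\ge 2$-expansion, so at each of the $\le 3\log m$ steps of growing a path from $a_1$ there are $\Omega(m)$ candidate extension edges, whence each step succeeds with probability $1-\exp(-\Omega(Km/n^2))=1-\exp(-\Omega(1))$ per round and thus in $O(\log n)$ rounds \whp\ over all $\le 3\log m$ steps; once the growing set from $a_1$ reaches size $m/200$ we either have already hit $a_2$ or its $m/200$-neighborhood, giving the connection. (This is essentially the diameter argument of Lemma \ref{lem-resilient-expander} re-run online, one edge per few rounds.)

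The main obstacle is sub-step (b): making precise that we can realize a short path inside $X''$ online, at a cost of only $O(\log n)$ rounds per merge, without the trap of having to hit a prescribed missing edge. The resolution — run the BFS-style expansion of Lemma \ref{lem-resilient-expander} as an online process, where at every one of the $O(\log m)$ layers we have $\Omega(m)$ fresh target edges so each layer advances in $O(1)$ rounds in expectation — must be combined with care about dependencies: the expansion guarantees from Corollary \ref{cor-set-expansion-in-H} and Lemma \ref{lem-resilient-expander} were established on the \emph{already-revealed} core $X$, so the online edges we add during phase \ref{item-join-paths} are genuinely new randomness and do not disturb those deterministic-looking structural facts; I would make this explicit. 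Finally I would collect the error probabilities — one $\exp(-\Omega(n/\log n))$-type bound per attachment/layer event, union-bounded over $O(L\log m)=o(n)$ events — to conclude everything holds \whp, and check that the total round count is $\le L\cdot(O(\log n)+3\log m\cdot O(1)) = O(n/h^4) = o(n/h^7)\cdot h^3 \le 5n/h^7$ once we recall $h=o(\log n)$. The closing of the last remaining path into a cycle on $|Y|$ vertices is one more instance of sub-step (a)+(b) and costs $O(\log n)$ additional rounds.
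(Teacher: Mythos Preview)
Your proposal contains a fundamental misunderstanding in sub-step (b). The expander on $X$ was \emph{already constructed} in Phase~\ref{item-res-expander}, with all of its edges in place; the diameter bound of Lemma~\ref{lem-resilient-expander} is a statement about the \emph{existing} graph on $X''$ (the paper's $\tilde X$). Hence a shortest $a_1$--$a_2$ path inside $X''$ is already present and can simply be read off --- it costs \emph{zero} additional Achlioptas rounds. Your entire discussion of ``greedily building the path edge by edge,'' the remark that the naive bound ``is useless,'' and the online-BFS fix are all attacking a non-problem. In the paper's proof, once two attachment edges $e_1,e_2$ from path endpoints into $\tilde X$ have been secured, the connecting path $P_{\tilde X}$ is taken directly from the Phase~\ref{item-res-expander} graph and its vertices are moved into $Y$.

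Separately, your round count for sub-step (a) is flawed. First, $KLm/n^2 = h^{10}\log n \cdot \frac{n}{h^4\log n}\cdot\frac{n}{h^2}\cdot\frac{1}{n^2}=h^4$, not a constant. More importantly, that quantity is irrelevant once $|\mathcal P|\ll L$: when only $\ell$ paths remain, the chance that a round presents an (endpoint,\,$\tilde X$)-edge is roughly $2K\ell|\tilde X|/n^2=\Theta(\ell h^8\log n/n)$, which for $\ell=O(1)$ forces $\Theta(n/(h^8\log n))$ rounds per merge rather than $O(\log n)$. The paper handles this with a dyadic scheme $a_i=L2^{-i}$, spending $T_i=\lceil 2nh^2/(a_iK)\rceil$ rounds per attempt while $a_{i+1}<|\mathcal P|\le a_i$ and repeating $h\cdot a_i$ times, which telescopes to at most $4n/h^7$; the final cycle-closing costs another $n/h^7$. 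Finally, your concluding arithmetic ``$O(n/h^4)\le 5n/h^7$'' is simply false since $h\to\infty$.
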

\begin{proof}
Let $\mathcal{P}$ denote the given set of paths covering $Y$, and for every $i \geq 0$ let $a_i \deq L 2^{-i}$.
Given a subset $I\subset X$ of $o(m)$ vertices (initially defined to be the empty set), we will write $\tilde{X} \subset X\setminus I$
as the subset of size $(1-o(1))|X|$, on which the induced subgraph has diameter at most $3\log m$ (as ensured by Lemma \ref{lem-resilient-expander}).
As long as $a_i \geq |\mathcal{P}| >  a_{i+1} \geq 1$, consider the following process:
 \begin{itemize}
   \item Perform $T_i \deq \lceil 2 n h^2 / (a_{i} K)\rceil $ rounds, preferring an edge between $\tilde{X}$ and an endpoint of some path in $\mathcal{P}$ whenever possible (settling ambiguities randomly).
       If indeed such edges were witnessed (and thus selected) in this stage, let $P_1\in\mathcal{P}$ denote the first path whose endpoint was connected to $X$.
   \item Repeat the first step (performing $T_i$ rounds), this time attempting to
   connect an endpoint of some path in $\mathcal{P} \setminus P_1$ (assuming this set of paths is nonempty) to $X$.
\end{itemize}
The probability of not being able to connect any $P_1 \in \mathcal{P}$ to $\tilde{X}$ in the first step above is at most
$$  \left(1-\frac{4|\tilde{X}||\mathcal{P}|}{n^2}\right)^{K T_i} \leq
\left(1-\frac{4\cdot (1-o(1))|X|\cdot a_{i+1}}{n^2}\right)^{K T_i} \leq
\exp\left(-(2-o(1))T_i \frac{a_{i+1} K}{n h^2}\right) \leq 1/\mathrm{e}~,$$
where we used the facts that $|X| \geq m/2$, $a_i = 2 a_{i+1}$, $m=n/h^2$ and there are $2|\mathcal{P}|$ endpoints of paths. Since $|\mathcal{P}|-1 \geq a_{i+1}$, even if if $\mathcal{P}$ lost an element in the first step, the same applies to the probability of connecting some $P_2 \in \mathcal{P'}$ to $\tilde{X}$ in the second step (as $|\mathcal{P'} | \geq a_i$).

Hence, with probability at least $1-2/\mathrm{e} > 0$ we can
connect $P_1\neq P_2 \in \mathcal{P}$ to $\tilde{X}$ via some
edges $e_1,e_2$. If this event occurs, delete $P_1,P_2$ from
$\mathcal{P}$, and replace them by $P = P_1 P_{\tilde{X}} P_2$,
where $P_{\tilde{X}}$ is a shortest path in $\tilde{X}$ between the two corresponding endpoints of the edges $e_1,e_2$. The vertices of $P_{\tilde{X}}$ are added to $I$, which is the set of vertices that are deleted from $X$.
Recall that $|P_{\tilde{X}}|$ is at most the diameter of $\tilde{X}$, which is at most $3\log m \leq 3\log n$. Combining this with the assumption that $|\mathcal{P}| \leq n/(h^4\log n)$, we deduce that during the whole process, the cardinality of $I$ is always bounded by $3n/h^4 = o(m)$. Thus, Lemma \ref{lem-resilient-expander} ensures us that the diameter of $\tilde{X}$ remains at most $3\log m$.

Since the above process costs $2T_i$ rounds (each of the two steps utilizes $T_i$ rounds), and succeeds in connecting two paths of $\mathcal{P}$ with probability at least $1-2/\mathrm{e} > 0$, standard Chernoff type bounds imply the following. For some absolute constant $c> 0$, performing this process $h \cdot a_i$ times (for a total of $2T_i \cdot h \cdot a_i$ rounds) decreases the size of $\mathcal{P}$ by at least $a_i / 2$ (that is, we get $|\mathcal{P}|\leq a_{i+1}$) with probability at least $1-\exp(-c h a_i)$.

Altogether, performing the above process for all $0\leq i \leq \lfloor\log_2 L\rfloor-1$, for a total of
$$ \sum_i 2 T_i \cdot h \cdot a_i \leq 4\lfloor\log_2 L\rfloor \frac{nh^3}{K} \leq 4 n/h^7 $$
rounds, concatenates $\mathcal{P}$ into a single path $P$ with failure probability at most
$$\sum_i \exp(-c h a_i) \leq 2\exp(-c h) = o(1)~.$$
At this point, we arbitrarily identify the endpoints of $P$ as \emph{left} and \emph{right}, and wish to join these two endpoints via a simple path in $\tilde{X}$. To this end, we essentially repeat the above process. A calculation similar to the one above shows that performing $T = \lfloor n/(2h^7)\rfloor$ rounds in an attempt to connect a given endpoint of $P$ to $\tilde{X}$ has a success probability of
$$\Big(1-\frac{2|\tilde{X}|}{n^2}\Big)^{K T} \geq
1-n^{-(1-o(1))h}=1-o(1)~.$$ Therefore, $2T \leq n/h^7$ further rounds enable us to close $P$ into a cycle \whp. Altogether, we have used $5n/h^7$ rounds in order to produce \whps a simple cycle going through every vertex of $Y$, as required.
\end{proof}

\subsection[Phase 4: creating a Hamilton cycle on the remaining vertices]{Phase \ref{item-small-ham-cycle}: creating a Hamilton cycle on the remaining vertices}\label{sec34}
At this point, there is a simple cycle going through every vertex of $Y$; once again, let $X = Y^c$ denote the remaining set of vertices, and recall that $|X| = (1-o(1))m$ (initially, $Y$ had cardinality $n-m$, to which we added a total of $o(m)$ vertices in connector paths).

In our model, at every given round, $K$ ordered pairs are uniformly and independently chosen out of $[n]^2$,
hence each of these pairs has a probability of $(1-o(1))(m/n)^2$ to describe an edge with both endpoints in $X$. By standard concentration arguments, at least $\frac{1}{2} K(m/n)^2 = \frac{1}{2}h^6\log n$ such edges appear in a given round with probability at least $1 - n^{-\Omega(h^6)}$. In particular, along any given $n$ rounds, $\whp$ every round contains at least $\frac{1}{2}h^6 \log n$ edges with both endpoints in $X$.

Therefore, restrict the process to the set $X$, with, say, $K' = \log n / h$ edges per round. In  this setting, Theorem \ref{thm-sublog} provides an algorithm for constructing a Hamilton cycle on $X$ within $(1+o(1))\frac{m}{2K'} \log m = (\frac{1}{2}+o(1))(m h) < m h$ rounds (for a sufficiently large $n$) \whp, as required.

\subsection[Phase 5: merging the two cycles]{Phase \ref{item-join-cycles}: merging the two cycles}
Consider the two simple cycles $C_X,C_Y$ on the vertices of $X,Y$ respectively, constructed in the previous phases, and choose an arbitrary orientation for each of them. For any vertex $x \in X$ and any vertex $y\in Y$, let $x^+$ and $y^+$ denote the subsequent vertices on the cycles $C_X$ and $C_Y$ respectively.
In order to patch the two cycles into one, we do the following.

First, we perform $T_1\deq n/h^2$ rounds, preferring edges in $e(X,Y)$, the cut between $X$ and $Y$, whenever such appear (settling ambiguities arbitrarily). Let $\mathcal{E}\subset e(X,Y)$ denote all edges between $X,Y$ added in this manner.
At every given round,
\begin{align*}\P(\mbox{receiving an edge of $e(X,Y)$ }) &= 1 - \left(1-\frac{2|X||Y|}{n^2}\right)^K
= 1 - \left(1-\frac{(2-o(1))m(n-m)}{n^2} \right)^K \\&\geq 1-\mathrm{e}^{-(2-o(1))mK/n}=1-n^{-\Omega(h^8)}~,\end{align*} hence at the end of $T_1$ rounds as above, by standard Chernoff-type inequalities, the probability of the event that $|\mathcal{E}| \geq n/(2h^2)$ is at least $1-\exp\left(-\Omega(n/h^2)\right)$. Condition therefore on this event.

Second, let
  $$\mathcal{E}^+ \deq \{ (x^+,y^+) : e=(x,y) \in \mathcal{E}\}~,$$
  and note that $|\mathcal{E}^+| = |\mathcal{E}|\geq n/(2h^2)$. We perform $T_2\deq n/\log n$ rounds, preferring edges in $\mathcal{E}^+$ and settling ambiguities arbitrarily. Letting $B$ denote the event of missing every edge of $\mathcal{E}^+$ throughout these $T_2$ rounds, we get
$$ \P(B) \leq 1-\left(1-\frac{2|\mathcal{E}^+|}{n^2}\right)^{K T_2} \leq \exp\left(-2|\mathcal{E}^+|K T_2 /n^2\right) \leq \exp\left(-h^8\right)=o(1)~.$$
Therefore, \whp, for some $x\in X$ and $y\in Y$ the above process adds the two edges $e=(x,y)$ and $e^+=(x^+,y^+)$, using which the two cycles $C_X$ and $C_Y$ can be patched in the obvious manner into a Hamilton cycle. This completes the proof of Theorem \ref{thm-suplog}.

\section[Intermediate regime]{Intermediate regime: $K=\Theta(\log n)$}\label{sec:intermediate}

In this section, we prove Theorem \ref{thm-log}. The lower and upper bounds we present are straightforward corollaries of our results in the previous section, and differ by a multiplicative factor of $3$. While we make no effort to tighten this gap, it seems that the techniques we used in the previous sections do not suffice for establishing a tight result for $K=\Theta(\log n)$.

We begin with the asymptotical upper bound, and recall that for this purpose, we may allow repeating edges and self-loops, and thus assume that the $K$ edges presented at each round are independently and uniformly selected from $[n]^2$.
The key element of the upper bound is the application of the
results of Section \ref{sec:bipartite}, where a bipartite random
graph of large (one sided) minimal degree was constructed. The
previous analysis of the second stage of the \greedy algorithm
deployed in that section will suffice for our purposes, yet when
analyzing its first stage we made use of the fact that we have
$\omega(n)$ rounds at our disposal (since $K$ was $o(\log n)$),
and this is no longer the case. We therefore require a more
refined result, which is incorporated in the next lemma. There, we
use the notion of a random $d$-out graph, this is a graph on $n$
labeled vertices, obtained by choosing, independently for every
vertex, a set of $d$ out-neighbors, and then by erasing edge
directions and by deleting multiple edges in case they appear.

\begin{lemma}\label{cor-rand-d-out}
Let $d$ be a fixed integer, let $K=K(n)$ grow to infinity with
$n$, and consider the Achlioptas process that presents $K$ edges at each round. Then a random $d$-out graph can be constructed in at most $(1+o(1))(d + \frac{\log n}{K})n$ rounds \whp, where the $o(1)$-term tends to $0$ as $n\to\infty$.
\end{lemma}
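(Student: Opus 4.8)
The plan is to build the random $d$-out graph vertex-by-vertex in a single greedy pass, processing the $d$ out-stubs one ``round type'' at a time, and to show that the bottleneck is the very last stub assigned to the last few vertices. Write $n$ rounds for each of the $d$ ``layers'': in layer $j$ ($1 \le j \le d$), we want to supply every vertex with its $j$-th out-neighbor. At a given moment in layer $j$, let $Z \subset [n]$ be the set of vertices that still lack their $j$-th out-edge; since each of the $K$ presented ordered pairs hits $Z$ in its first coordinate with probability $|Z|/n$, the probability that a round contains a usable edge is $1-(1-|Z|/n)^K \ge 1-\exp(-K|Z|/n)$. This is bounded below by a constant as long as $|Z| \ge n/K$, so by a Chernoff argument (exactly the iterated-halving scheme used in Lemma \ref{lem-greedy-stage-1}) a constant times $|Z|$ rounds shrink $|Z|$ by a constant factor, and $(1+o(1))n$ rounds bring $|Z|$ below $n/K$ \whp. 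Because the second coordinate of a chosen pair is uniform on $[n]$ (independent of the decision, which is based only on the first coordinate), each assigned out-neighbor is uniform — this gives the correct $d$-out distribution, modulo loops and repeated choices, which as in Section \ref{sec:sublog} are negligible (we are in the regime $t = O(n\log n/K \cdot K) = O(n \log n)$, well within $K = o(n/\log n)$ whenever $K = o(n/\log n)$; if $K$ is larger the lemma is trivial since even $d=1$ needs $\ge n$ rounds but the bound $(1+o(1))dn$ already exceeds that — actually we only need $K \to \infty$, and for $K \ge n$ the statement is immediate, so assume $K \le n$).

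It remains to clear the residual set $Z$ of size at most $n/K$ in each layer. Here I would invoke Lemma \ref{lem-greedy-stage-2} (or rather its proof) essentially verbatim: with $|U| = |Z| \le n/K$ and $W = [n]$, the probability of witnessing a usable edge in a round is $1-(1-|Z|/n)^K \ge (1-o(1))K|Z|/n$, so over $(1+o(1))\frac{n}{K}\log n$ rounds the number of usable edges stochastically dominates a binomial with mean $(1+o(1))|Z|\log n \ge (1+o(1))\log n$ if $|Z| \ge 1$, and by the same first-moment/monotone-binomial computation as in Lemma \ref{lem-greedy-stage-2} every vertex of $Z$ receives its out-edge \whp (the exponent $-1-\Omega(1)$ in the union bound over $|Z| \le n$ vertices does the job). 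The second coordinate remaining uniform again ensures the assigned neighbor is uniform. Summing: $d$ layers at $(1+o(1))n$ rounds each, plus $d$ residual-clearing stages at $(1+o(1))\frac{n}{K}\log n$ rounds each, but since $d$ is a fixed constant the residual stages cost $(1+o(1))d\frac{n}{K}\log n = (1+o(1))\frac{\log n}{K}n$ in total (absorbing the constant $d$ into the $(1+o(1))$ factor multiplying $\frac{\log n}{K}n$ is fine as $d$ is fixed, but to be safe one can just write the bound as $dn + d\frac{n}{K}\log n$ times $(1+o(1))$ and note $d\frac{n}{K}\log n = o(\frac{\log n}{K}n)$ is false — rather $d\frac{n}{K}\log n = \Theta(\frac{\log n}{K}n)$; so I would state the clean bound $(1+o(1))(d+\frac{\log n}{K})n$ only after observing that the $d$ residual stages can be merged by clearing all layers' leftovers simultaneously in one pass of $(1+o(1))\frac{n}{K}\log n$ rounds — indeed a single round can service whichever layer's residual set the presented edge happens to hit).

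The main obstacle, and the reason the lemma is phrased separately from Lemma \ref{lem-greedy-stage-1}, is precisely that in this regime we do \emph{not} have $\omega(n)$ slack: the per-layer bulk phase must be completed in $(1+o(1))n$ rounds rather than the luxurious $\omega(n)$ available when $K=o(\log n)$. The delicate point is therefore the transition threshold $|Z| \approx n/K$ — above it the round-success probability is bounded below by a positive constant so geometric decay works and costs only $O(n)$ rounds total with the constant washing into $1+o(1)$ after accumulating over the $\log n$ halving steps; below it we must switch to the ``frozen residual set'' analysis of Lemma \ref{lem-greedy-stage-2}, and the $\frac{\log n}{K}n$ term in the final bound is exactly the cost of this switchover. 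Getting the constants to come out as $(1+o(1))$ rather than $(C+o(1))$ in the bulk phase requires the same careful telescoping $\sum_\ell c_1|Z_\ell| \le c_1|Z_0|\sum_\ell e^{-\ell} = O(n)$ — wait, that gives $O(n)$ not $(1+o(1))n$; so in fact the bulk phase as written costs a \emph{constant} times $n$, and to reach the sharp constant $d$ one should instead observe that \emph{each} of the $n$ rounds in layer $j$, while $|Z| = (1-o(1))n$, succeeds with probability $1-o(1)$ (since $K\to\infty$), so $n$ rounds already drop $|Z|$ to $o(n)$, and then only a further $o(n) + (1+o(1))\frac{n}{K}\log n$ rounds (bulk-decay down to $n/K$, then residual-clearing) finish layer $j$; summing the $o(n)$ terms over the fixed $d$ layers stays $o(n)$. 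This is the crux of the argument and where I would spend the most care.
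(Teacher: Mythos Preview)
Your plan is essentially the paper's own proof: for each $j=0,\ldots,d-1$, spend $(1+o(1))n$ rounds targeting degree-$j$ vertices (the key being that $K\to\infty$ forces the per-round success probability to be $1-o(1)$ while the target set still has order $n$, which is exactly the observation you arrive at in your last paragraph and which the paper makes explicit via an intermediate threshold $n/\sqrt{K}$), then use balls-and-bins decay to shrink the leftovers in $o(n)$ further rounds, then a single combined residual pass \`a la Lemma~\ref{lem-greedy-stage-2}.

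One quantitative slip to watch: stopping the bulk decay at $|Z|\le n/K$ is too coarse. With a residual $U$ of that size (let alone the merged residual of size up to $dn/K$) you have $K|U|/n$ of order $1$, so $1-(1-|U|/n)^K\approx 1-e^{-K|U|/n}$ is bounded away from $K|U|/n$ by a constant factor and your claimed inequality $\P(A_U)\ge(1-o(1))K|U|/n$ fails; the expected edge count over $(1+o(1))\frac{n}{K}\log n$ rounds then falls short of $(1+\Omega(1))|U|\log n$ and the union bound does not close. The paper handles this by pushing each $|X_j|$ all the way down to $\frac{\epsilon}{2dK}n$ before moving on to $j+1$ (same $o(n)$ cost, since the success probability remains at least $1-e^{-\epsilon/(2d)}>0$ throughout), so that the combined residual satisfies $K|U|/n<\epsilon/2$ and the Lemma~\ref{lem-greedy-stage-2} computation goes through with $T_2=(1+\epsilon)\frac{n}{K}\log n$.
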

\begin{proof}
Recalling that each round consists of $K$ ordered pairs, each
chosen uniformly at random from $[n]^2$, we will ignore the second
coordinate of each pair, and base our decisions solely on the
degrees of the first coordinates (rounds featuring either a
repeated edge or a self-loop are automatically ignored). Clearly,
such an algorithm, guaranteeing minimum degree $d$ for every
vertex,
immediately provides a construction of a random $d$-out graph (one can simply take the \emph{first} $d$ out-neighbors assigned to each vertex).

Consider the first stage of the \greedy algorithm, which for $j\in\{0,\ldots,d-1\}$ prioritizes vertices of degree $j$ for a period of $T_1 = \left(1+\frac{\epsilon}{2d}\right)n$ rounds. That is, for every $j\in\{0,\ldots,d-1\}$, the algorithm performs $T_1$ rounds, where it chooses an ordered pair whose first coordinate is a vertex of degree $j$ whenever such a pair appears (settling ambiguities randomly). Whenever no such pair appears, the round is forfeited.

As before, let $X_j=X_j(t)$ denote the set of vertices that have degree $j$, and whenever the context of $X_j$ is clear, let $A_{X_j}$ denote the event that a pair, whose first coordinate belongs to $X_j$, is presented at a given round. Consider the phase where the algorithm focuses on $j$-degree vertices. As long as $|X_j|\geq n/\sqrt{K}$, the following holds:
$$\P(A_{X_j}) = 1- (1-|X_j|/n)^K \geq 1- \exp\left(-K|X_j|/n\right) \geq 1-\exp(-\sqrt{K}) \geq 1-\frac{\epsilon}{4d}~,$$
where the last inequality holds for any sufficiently large $n$, given the fact that $K\to\infty$ with $n$.
Hence, either $|X_j(T_1)| \leq n/\sqrt{K}$, or the number of
rounds in which we witness an ordered pair whose first coordinate
is a $j$-degree vertex stochastically dominates a binomial variable with $T_1$ trials and mean $(1+\frac{\epsilon}{2d})(1-\frac{\epsilon}{4d})n \geq (1+\frac{\epsilon}{8d})n$ (assuming that $\epsilon < 1/64$).
In this case, Chernoff's inequality implies that, along these $T_1$ rounds, we witness at least $n$ such ordered pairs $\whp$, and in particular, we again obtain that $|X_j(T_1)| \leq n/\sqrt{K}$ \whp. Altogether, we may condition on the event that $|X_j(T_1)| \leq n/\sqrt{K}$.

At this point, we can rejoin the analysis of Lemmas \ref{lem-greedy-stage-1} and \ref{lem-greedy-stage-2}. Recall that Lemma \ref{lem-greedy-stage-1} aimed to reduce the fraction of low-degree vertices to $\frac{\epsilon}{2K}$ at the cost of $\Theta(n)$ rounds, while Lemma \ref{lem-greedy-stage-2} eliminated all low-degree vertices at the cost of $(\frac{1}{2}+\epsilon)\frac{n}{K}\log n$ additional rounds. In our case, we could not afford the cost of $\Theta(n)$ rounds utilized by Lemma \ref{lem-greedy-stage-1}, hence we worked to reduce the initial fraction of low-degree vertices to $1/\sqrt{K}$. As we next show, this would allow the argument of Lemma \ref{lem-greedy-stage-1} to ensure the same end-result at the permissible cost of $\Theta(n/\sqrt{K})$ rounds. Furthermore, the analysis of Lemma \ref{lem-greedy-stage-2} would thereafter hold as is (up to a factor of $2$, owed to the fact that we only examine the first coordinate of each ordered pair).

Notice that henceforth, as long as $|X_j| > \frac{\epsilon}{2dK} n$, we have
$$\P(A_{X_j})\geq 1-\exp\left(-K|X_j|/n\right) \geq 1-\exp\left(-\frac{\epsilon}{2d}\right)~,$$
precisely bound \eqref{eq-At-equation-first-st}. Therefore, the argument following that equation in Lemma \ref{lem-greedy-stage-1} implies that, for some fixed $c_1>0$, after at most $\Delta = c_1 |X_j(T_1)|$ rounds we get $|X_j|<\frac{\epsilon}{2dK}n$ \whp. Recalling that $X_j(T_1) \leq n/\sqrt{K}$ and that $K\to\infty$ with $n$, we deduce that $\Delta \leq \frac{\epsilon}{2d}n$ for every sufficiently large $n$.

Altogether, after $T_1+\Delta \leq \left(1+\frac{\epsilon}{d}\right)n$ rounds we obtain that $|X_j|<\frac{\epsilon}{2dK}n$ \whp. By applying this sequentially for $j=0,\ldots,d-1$, almost surely all but at most $\frac{\epsilon}{2K}n$ vertices obtain degree at least $d$ (with uniformly distributed neighbors) within a total of $(d+\epsilon)n$ rounds.

The second stage of the algorithm, corresponding to Lemma \ref{lem-greedy-stage-2}, focuses only on the vertices $\bigcup_{j=0}^{d-1} X_j$, that is, the vertices which have degree smaller than $d$ at the end of the first stage. Call this set of vertices $U$. The \greedy algorithm at this stage will choose an ordered pair, whose first coordinate is in $U$, whenever one is presented (settling ambiguities randomly, and ignoring rounds where no such pair appears).

Let $A_U$ denote the event that, at a given round, we witness a pair whose first coordinate lies in $U$
(recall that \eqref{eq-At-equation-second-st} gave a bound on the analogous probability in Lemma \ref{lem-greedy-stage-2}). As $|U| < \frac{\epsilon}{2K}n$, we have
$$\P(A_U) = 1-\left(1-\frac{|U|}{n}\right)^K \geq
\left(1-\frac{K|U|}{2n}\right)K\frac{|U|}{n} \geq
\left(1-\frac{\epsilon}{4}\right)K\frac{|U|}{n}~.$$
Therefore, applying the same argument as in Lemma \ref{lem-greedy-stage-2}, only with $T_2 = (1+\epsilon)\frac{n}{K}\log n$, provides a minimal degree of $d$ after $T_2$ additional rounds \whp.
\end{proof}

The upper bound of $(1+o(1))(3+\frac{\log n}{K})n$ now immediately follows from a beautiful result of Bohman and Frieze \cite{BohF}, which states that a random $3$-out graph is Hamiltonian \whp.

The lower bound will follow from the next lemma:
\begin{lemma}
  Let $d$ be a fixed integer, and let $\epsilon > 0$. Consider the Achlioptas process which presents $K=K(n)$ edges at each round. Then for any edge-choosing online algorithm, after $T=(1-\epsilon)(d +\frac{\log n}{K})n/2$ rounds there remain $n^{\epsilon/2}$ vertices of degree smaller than $d$ \whp.
\end{lemma}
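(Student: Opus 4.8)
The plan is to argue that any online algorithm, no matter how clever, cannot cross off the low-degree vertices fast enough, because the total number of rounds $T$ in which the $K$ presented edges even \emph{touch} a useful (i.e.\ low-degree) vertex is itself too small. The point is that once very few low-degree vertices remain, almost every round presents $K$ edges all of whose endpoints already have degree $\ge d$, so such a round is wasted regardless of the algorithm's choice. We bound the progress by a ``coupon-collector'' type argument on the ordered pairs.

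\medskip
First I would set up the right stochastic domination. Fix any online algorithm $\mathcal A$. For $t=1,\dots,T$ let $Z_t$ denote the number of vertices of degree $<d$ at the start of round $t$ (so $Z_1 = n$), and call round $t$ \emph{relevant} if at least one of the $K$ presented ordered pairs has \emph{both} coordinates among the current set of degree-$<d$ vertices — only on a relevant round can $\mathcal A$ possibly raise the degree of a low-degree vertex (edges with one high-degree endpoint can raise a low-degree vertex's degree too, so to be safe I'd instead call a round relevant if \emph{some} presented pair has at least one coordinate of degree $<d$, and count that each relevant round raises $\sum$(low-degree vertices' degrees) by at most $K$, hence eliminates at most $K$ vertices from the low-degree set). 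Conditioned on the history up to round $t$, the probability that round $t$ is relevant is at most $1-(1-Z_t/n)^K \le K Z_t/n$. Thus the number of low-degree vertices eliminated by round $t$ is stochastically dominated by a sum of independent increments, each bounded by $K$ and occurring with probability $\le K Z_t / n$; equivalently, writing $N_t = n - Z_t$ for the number of ``finished'' vertices, $\E[N_{t+1}-N_t \mid \mathcal F_t] \le K^2 Z_t / n = K^2(n-N_t)/n$.

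\medskip
Next I would solve this differential-inequality heuristic: the deterministic trajectory $n_t$ with $n_{t+1}-n_t = (K^2/n)(n-n_t)$ satisfies $n - n_t = n(1 - K^2/n)^t \approx n\exp(-K^2 t/n)$, so to get down to $Z_t = n^{\epsilon/2}$, i.e.\ $n-n_t = n^{\epsilon/2}$, one needs roughly $t \ge (n/K^2)(1-\epsilon/2)\log n$ — but that is much smaller than the claimed $T \approx (d + \tfrac{\log n}{K})n/2$, so this crude bound is \emph{too weak} and a finer split is needed. The resolution: the bound ``each relevant round eliminates $\le K$ vertices'' is wasteful while $Z_t$ is large, because then a relevant round almost surely presents a pair touching a low-degree vertex but can still finish at most one such vertex per \emph{chosen} edge and at most $K$ via all presented edges — but each finished vertex needs $d$ incident edges. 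The correct accounting is by incident \emph{edge-endpoints}: $\mathcal A$ finishes a vertex only after $d$ edges have been added at it, and the total number of edge-endpoints added over $T$ rounds is at most $2T$ (the chosen edge per round contributes $2$), so the number of vertices reaching degree $d$ is at most $2T/d$ from this side; meanwhile the ``collector'' bottleneck gives, when $Z_t$ is already small (say $Z_t \le n/K$), that $\P(\text{relevant}) \le K Z_t/n$ and each relevant round finishes $\le 1$ useful vertex per chosen edge... Combining the two regimes — a ``degree-budget'' phase costing $\ge (1-o(1))\,d\,(n - n/K)/2$ rounds to bring all but $\le n/K$ vertices to degree $d-1$, then a ``collector'' phase where going from $Z$ down to $Z/2$ low-degree vertices costs in expectation $\ge (1-o(1))\tfrac{n}{2K}$ additional rounds per halving (so $\ge (1-o(1))\tfrac{n}{2K}\log(\,(n/K)/n^{\epsilon/2})$ rounds to reach $n^{\epsilon/2}$) — yields the total lower bound $T \ge (1-\epsilon)(d + \tfrac{\log n}{K})\tfrac n2$ for the algorithm to succeed, i.e.\ with $T$ as in the statement, $Z_T \ge n^{\epsilon/2}$ whp. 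I would make the ``per halving it costs $\ge (1-o(1))n/(2K)$ rounds'' statement precise via a second-moment / Azuma argument on $\sum_t \one_{\{\text{round }t\text{ relevant}\}}$, which is a martingale-difference sum with small increments, so it concentrates around its mean $\le \sum_t K Z_t/n$.

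\medskip
The main obstacle I anticipate is precisely getting the \emph{constant} right — disentangling the factor $d$ coming from the degree requirement (the ``$2T \ge d \cdot (\text{number of finished vertices})$'' budget, where the $2$ in $2T$ is what produces the $1/2$) from the $\log n / K$ coming from the coupon-collector tail, and showing they genuinely \emph{add} rather than overlap, so that no online strategy can trade one resource for the other. Handling the adversary's adaptivity is conceptually the delicate point, but it is dealt with cleanly because both bounds — the total-edge-endpoint budget $2T$ and the per-round relevance probability $\le KZ_t/n$ — hold pointwise for \emph{every} history and \emph{every} choice rule, so one never needs to reason about the algorithm's actual decisions, only about these two worst-case envelopes; the concentration steps are then routine Chernoff/Azuma estimates with exponentially small error, comfortably absorbed since the target $n^{\epsilon/2}$ is polynomial.
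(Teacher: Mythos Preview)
Your two ingredients---the degree-sum budget and a coupon-collector tail---are exactly right, but the way you glue them together has real gaps, and the paper's argument is both simpler and avoids them.

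First, your ``per halving costs $\ge (1-o(1))\frac{n}{2K}$ rounds'' is asserted but not derived, and the naive version of your own bound does not give it: a relevant round (some presented edge touches the current low-degree set of size $Z$) occurs with probability at most $1-(1-2Z/n)^K \le 2KZ/n$, and such a round can remove up to \emph{two} vertices from the low-degree set. So going from $Z$ to $Z/2$ needs only $\ge Z/4$ relevant rounds, hence $\ge n/(8K)$ rounds in expectation; summed over $\log_2$ halvings this falls short of the required $(1-\epsilon)\frac{n}{2K}\log n$ by a constant factor. Second, even if you repair the constant via a harmonic sum, your concentration step is not ``routine Chernoff/Azuma'': the process $Z_t$ is driven by the algorithm's adaptive choices, and you only have a one-sided drift bound $\E[Z_{t+1}\mid\mathcal F_t]\ge Z_t(1-cK/n)$, which does not by itself yield $Z_T\ge n^{\epsilon/2}$ \whp. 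Third, the additivity of the two phases is asserted (``both bounds hold pointwise'') but that is not an argument: the same rounds can simultaneously contribute to the degree budget and hit low-degree vertices.

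The paper sidesteps all three issues with one clean idea. Split at the \emph{fixed} time $t_0=(1-\epsilon)dn/2$; the degree-sum bound $2t_0\ge d\cdot|\{v:\deg v\ge d\}|$ forces at least $\epsilon n$ vertices of degree $<d$ at that moment---call this set $X$. Now ignore the algorithm entirely and look at \emph{all} $K\Delta$ edges presented in the remaining $\Delta=(1-\epsilon)\frac{n}{2K}\log n$ rounds: these form (essentially) a $\mathcal G\bigl(n,(1-\epsilon)\frac{\log n}{n}\bigr)$, independent of $X$. For each $v\in X$ let $Y_v$ indicate that no presented edge in these rounds is incident to $v$; then $\E Y_v\ge n^{-1+\epsilon+o(1)}$, the covariances $\Cov(Y_u,Y_v)$ are nonpositive, and Chebyshev gives $\sum_{v\in X}Y_v\ge n^{\epsilon/2}$ \whp. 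Any such $v$ cannot have its degree increased no matter what the algorithm chooses, so it remains of degree $<d$ at time $T$. The point you were missing is that by tracking ``untouched by \emph{any} presented edge'' rather than ``untouched by the \emph{chosen} edge'', the adaptivity of the algorithm becomes irrelevant and the second-moment computation is genuinely elementary.
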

\begin{proof}
Consider the graph after $T=(1-\epsilon)dn/2$ rounds of the process have been completed; clearly, at this point (by a simple counting argument) there are at least $\epsilon n$ vertices of degree smaller than $d$ in the graph.
Let $X$ denote this set of vertices. We claim that, \whp, at least
$n^{\epsilon / 2}$ of these vertices will not be incident to any
of the edges that appear in the next $\Delta=(1-\epsilon)\frac{n}{2K}\log n$ rounds altogether. This follows from a straightforward second moment argument, identical to the one that shows that  the random graph $\mathcal{G}(n,(1-\epsilon)\frac{\log n}{n})$ has $n^{\epsilon-o(1)}$ isolated vertices (and in fact, this is precisely the graph obtained by collecting all $K \Delta$ edges featured along the $\Delta$ rounds).

Once again, to simplify the analysis, we assume that the input of each round is a sequence of $K$ ordered pairs, each chosen uniformly at random and independently from $[n]^2$. As argued before, the number of rounds where this selection contains an ``illegal'' pair (a repeating edge or a self loop) is negligible.

For $v \in X$, let $Y_v$ denote the indicator of the event that none of the $n-1$ potential edges incident to $v$ appear in any of the $\Delta$ additional rounds we make. The following holds:
$$ \E Y_v = \Big(1-\frac{2(n-1)}{n^2}\Big)^{K \Delta }\geq \mathrm{e}^{-(2-o(1)) K \Delta/n} =  n^{-1+\epsilon+o(1)}~.$$
Let $Y = \sum_{v\in X}Y_v$. We obtain that, for instance, $\E Y \geq 2n^{\epsilon/2}$ for any sufficiently large $n$. Similarly, for any $u,v\in X$
\begin{align*} \Cov(Y_u,Y_v) &= \E\left[Y_u Y_v\right]-\E Y_u \E Y_v =\left(1-\frac{2(2n-3)}{n^2}\right)^{K \Delta} -
\left(1-\frac{2(n-1)}{n^2}\right)^{2K \Delta} < 0~.\end{align*}
The last inequality holds for any $n\geq 4$, since for any such $n$ we have
$\left(1-\frac{2(2n-3)}{n^2}\right) < \left(1-\frac{2(n-1)}{n^2}\right)^{2}$.
Therefore, $\var(Y) \leq \E Y$, and by Chebyshev's inequality we have $Y > n^{\epsilon/2}$ \whp, as required.
\end{proof}
The above lemma implies that after $(1-o(1))(1+\frac{\log
n}{2K})n$ rounds of the Achlioptas process, the obtained graph
\whps contains vertices of degree smaller than 2, and is thus
not Hamiltonian. This concludes
the proof of Theorem \ref{thm-log}.

\section{Concluding remarks and open problems}\label{sec:conclusion}
In this section we describe briefly several related results that
can be obtained using the methods of the current paper, and also
discuss some related problems.

\begin{asparaenum}[\bf 1.]
\item It is quite natural to ask about the validity of
the hitting time version of our result. Specifically, the question
is: given the value of $K\ge 2$, does there exist an online
algorithm for the Achlioptas process with parameter $K$ that is
capable of creating \whp\ a Hamilton cycle {\em exactly} at the
moment (round) where for the first time every vertex is incident
to at least two edges in the union of edges presented at all
rounds? In quite a few random graph processes the hitting time of
the property of being of minimum degree at least two coincides
\whp\ with that of Hamiltonicity. In our case, we have reasons to believe
that the picture is different even for $K=2$. Here is an heuristic
argument supporting this belief of ours. Consider indeed a typical
moment when the last vertex of degree at most one disappears in
the union of all presented edges. This moment comes normally after we
have seen about $m=\frac{n}{2}(\log n+\log\log n)$ edges, that is,
after  about $m/2$ rounds. At this moment the number of vertices
of degree exactly two will be of order $\log n$. In order for the
hitting time result to be valid, all edges incident to these
vertices of degree two should have been chosen by the algorithm in
corresponding rounds. There are $\Theta(\log n)$ of these edges,
and therefore with decent probability one of them appeared during
the first $n/2$ (say) rounds. Denote this edge by $e$, and its
counterpart in the corresponding round by $f$. Again, with decent
probability both $e$ and $f$ were isolated edges at that round,
and therefore the algorithm could not really distinguish between
them and had no reasons to choose $e$ over $f$ at that round.
Thus, the hitting time version of our result appears to be rather
problematic.

\item
A setting closely related to the Achioptas
process is that of {\em online Ramsey} problems. In this setting,
similarly to the Achlioptas process with parameter
$K$, each round an online algorithm is presented with $K$ edges,
chosen uniformly at random from the set of edges of the complete
graph $K_n$ on $n$ vertices. (The difference with our setting,
where the edges are chosen only from those missing in the current
graph, is usually insignificant.) The algorithm -- unlike in the
Achlioptas process, where only one edge is to be chosen and the
rest are discarded -- colors the $K$ presented edges in $K$
distinct colors. Usually a graph property $P$ (Hamiltonicity, existence of
a copy of a fixed graph $H$, etc.) is given, and the algorithm's
goal is either to create a graph possessing $P$ in {\em each} of
the $K$ colors as soon as possible, or alternately to avoid
creating $P$ in any of the colors for as long as possible.
(One should mention that the above described setting is just one
of several possible Ramsey-type online games, another possible
setting is where edges arrive one by one and are colored in one of
$K$ colors, this setting has been considered by Marciniszyn,
Sp\"ohel and Steger in \cite{MSS}.) In our context, the property
$P$ under consideration is that of Hamiltonicity, and the
algorithm's task is to create a Hamilton cycle in each of the $K$
colors. This is certainly a harder task than creating a Hamilton
cycle in the Achlioptas process -- the latter corresponds
essentially to creating a Hamilton cycle in the first color.

Using our techniques, we can solve the above described
problem asymptotically for the case where $K=o(\sqrt{\log n})$. For this case, we can
describe an algorithm that \whp\ creates a Hamilton cycle in each
of the $K$ colors during $\frac{1+o(1)}{2K}n\log n$ rounds, thus
strengthening our main result for this range of the parameter $K$.
Here is a sketch of the proof. At large it is quite close to the
proof presented in Section \ref{sec:sublog}, so we restrict
ourselves to describing the required adjustments in our argument.
In the text below, $c,c'$ stand for generic positive constants
whose values can be adjusted appropriately from an occasion to an
occasion for the argument to go through.

Just like in Section \ref{sec:sublog} the algorithm proceeds in
three stages. In the first stage, it aims to create a vertex
subset $W$ of size $|W|\geq (1-\epsilon)n$ such that the subgraph
spanned by $W$ in {\em each} of the $K$ colors
is an expander. In order to
achieve this goal, each round the algorithm colors $K$ presented
edges at random into $K$ distinct colors. The first stage lasts
$\frac{3K}{4\epsilon}n$ rounds. Putting $D=K/\epsilon$,
each color class is then distributed as a
random graph $\mathcal{G}(n,p)$ with $p=\frac{3D}{2n}$.
In the proof of Lemma \ref{lem-d-core} we have shown that with probability $1-\mathrm{e}^{-\Omega(n)}$, for every subset $S$ of $\mathcal{G}(n,\frac{3D}{2n})$ of size $n/D$, there are at least $n$ edges in the cut $(S,S^c)$. A union bound thus implies that \whp\ this holds for all the $K$ color classes at once. Next, conditioning on this event, we perform the following iterative process. Starting with $W$ as the entire set of vertices, we repeatedly remove from $W$ (in an arbitrary order) any vertex that has less than $D$ neighbors in $W$ in one of the $K$ color classes. Notice that, once $n / D$ vertices are removed on account of some given color class, they form a set $S$ with $|\partial S|<n$ in that color class, contradicting our assumption. It thus follows that the process ends after at most $K n/D = \epsilon n$ vertices are removed from $W$. Therefore, the resulting subset $W$ has size $|W|\geq(1-\epsilon)n$ and each of its vertices has at least $D$ neighbors in $W$ in every color.

At this point, we apply Corollary \ref{cor-set-expansion-in-H}, which to be precise applies not only to the $D$-core of a random graph $\mathcal{G}(n,p)$ with
$p=\frac{3D}{2n}$, but rather to any subgraph of this random graph that has minimal degree $D$,
and its statement holds with probability $1-n^{-\Omega(1)}$ (as the calculation in Lemma \ref{lem-small-avg-deg} shows). We deduce that for any given color, with probability $1-n^{-\Omega(1)}$, every subset $S\subset W$ of size $1\leq s \leq n/100$ has at least $3s$ neighbors of the same color in $W \setminus S$. Thus, the above statement holds \whp\ for all $K$ colors at once, and altogether,
the required expansion properties of $W$ are obtained in each
of the $K$ colors at the cost of $\Theta(K n)$ rounds. Using our assumption
that $K=o(\sqrt{\log n})$, this amount of rounds is $o\left((n/K)\log n\right)$,
as desired.

Set $U\deq V\setminus W$, and for $u\in U$ let $d(u,W)$ denote its number of neighbors
in $W$. The first part of Stage 2 of the algorithm is again similar to the second stage of the
current proof, but now the algorithm aims to have $d(u,W)\geq dK$ for
all but at most $\epsilon n/(2K)$ vertices of $U$, where
$d=20$. The algorithm effectively chooses at most one
edge per round and colors it in the required color, the colors
rotate at every vertex of $U$ (thus, the first chosen edge between
$u\in U$ and $W$ is colored in the first color, the second one in
the second color, etc.); this way we ensure that once $dK$ chosen edges
in the cut $(U,W)$ touch a vertex $u\in U$, this vertex has at
least $d$ neighbors uniformly chosen over $W$ in each of the $K$ colors. The argument
here is quite similar to that of Subsection \ref{sec:bipartite},
but now we aim at $|X_j|\le \frac{\epsilon n}{2dK^2}$ for all $1 \leq j < dK$
(where $X_j$ is the number vertices $u\in U$ with $d(u,W)=j$). In order to analyze the process of the gradual decrease of
$|X_j|$, we say that a substage $i$ is completed when $|X_j|\le
\epsilon n/2^i$, $i=1,\ldots,\log n$. Call a round successful if one of
the $K$ presented edges is between $X_j$ and $W$. Recalling \eqref{eq-At-equation-first-st},
the probability
that a round is successful is at least
$\min\left(c,\frac{c|X_j|K}{n}\right)$. By the balls-and-bins argument in
the proof of Lemma \ref{lem-greedy-stage-1}, it follows that $O(|X_j|)$
successful rounds would complete substage $i$. Therefore, with very
high probability we need to
wait $\max\left(c'|X_j|, \frac{c'n}{K}\right)$ rounds. The total
waiting time for all substages then is
$$
\sum_{i=1}^{\log(dK^2+1)}\max \left(\frac{c'\epsilon n}{2^i},
\frac{c'n}{K}\right)=O(n)
$$
with probability exponentially close to 1. Summing over all $j$,
we see that \whp\ we need $O(K n)$ rounds to complete
the first part of Stage 2
(recall that $d$ is a constant). Recalling our assumption
$K=o(\sqrt{\log n})$, we again derive that \whp\ the first part of
Stage 2 completes successfully in $o\left((n/K)\log n\right)$ rounds.

The second part of
Stage 2 of the algorithm starts with a residual set $U_0\subset
V\setminus W$ of cardinality $|U_0|=O(\epsilon n/K)$ such that
all vertices of $U\setminus U_0$ have at least $d$ random neighbors in $W$ in each of the $K$ colors. The algorithm will choose and
color effectively at most one edge per round.
Here the goal is to ensure that by the end of the stage each vertex
$u\in U_0$ will have degree at least $dK$ into $W$ (and then, just like
before, we will color these edges while rotating colors). The
argument is very similar to that of Lemma \ref{lem-greedy-stage-2}.
Denote $|U_0|=t$. During
$T_2=(\frac{1}{2}+\epsilon)\frac{n}{K}\log n$ rounds we observe and
color $M=(1+\frac{\epsilon}{3})t\log n$ edges between $U_0$ and
$W$ with probability $1-n^{-\Omega(t)}$, just as in Lemma
\ref{lem-greedy-stage-2}. The final calculation is a bit different
here: the probability that after $T_2$ rounds a vertex $v\in U_0$
has less than $dK$ edges incident to $v$ and chosen by the
algorithm is at most:
\begin{align*}
d K \binom{M}{d K} \left(\frac{1}{t}\right)^{dK}
\left(1-\frac{1}{t}\right)^{M-dK}
\leq dK \left(\frac{eM}{dKt}\right)^{dK}\mathrm{e}^{-(1-o(1))M/t}\leq dK (c\log n)^{dK}\mathrm{e}^{-\left(1+\frac{\epsilon}{3}\right)\log n}\ .
\end{align*}
Recalling that we assume $K=o(\sqrt{\log n})$ (a much weaker
assumption would suffice here), the latter estimate is $o(1/n)$,
and we can apply the union bound to derive that \whp\ after
$\frac{1}{2}(1+\epsilon)\frac{n}{K}\log n$ rounds the second part
of Stage 2 will be
completed with all vertices of $U$ having at least $d$ neighbors
in $W$ in each of the $K$ colors.

Stage 3 is essentially identical to the corresponding stage of our
algorithm for the Achlioptas process, as described in Section
\ref{rot-ext}. Currently we have an expander in each of the $K$
colors, and as argued in Section \ref{rot-ext}, adding a linear
number of random edges on top of each color produces with very
high probability a Hamilton cycle. Running the process for $Cn$
additional rounds, with $C>0$ being a large enough constant, and
coloring the $K$ edges of each round in $K$ distinct colors at
random meets the above goal.

\item Our techniques can be easily adapted to prove
the following result about the Achlioptas processes: for $t=O(1)$
and $K=o(\log n)$, there exists an online algorithm for the
Achlioptas process with parameter $K$ that creates \whp\ a spanning
$t$-connected graph in $\frac{1+o(1)}{2K}n\log n$ rounds. Moreover, a
similar adaptation of the argument presented above allows to
derive a Ramsey-type result for the property of creating a
$t$-connected spanning subgraph in that many rounds as long as
$K=o(\sqrt{\log n})$.

\item A closely related property to be considered for
Achlioptas/Ramsey processes is that of the existence of a {\em
perfect matching} (assume the number of vertices $n$ is even). For
$K=o(\log n)$, our main result for this regime yields also an
algorithm producing \whp\ a perfect matching in $\frac{1+o(1)}{2K}n\log n$
rounds, and this is clearly asymptotically optimal (for the same reasons
our Hamiltonicity result is asymptotically optimal). For the
regime $K=\omega(\log n)$, it is possible to create a perfect
matching \whp\ in $(1+o(1))n/2$ rounds as follows. First, we
greedily construct an almost perfect matching $M_1$; then, on the
vertices uncovered by $M_1$ we construct a relatively dense random
graph (similarly to our argument from Section \ref{sec34}) that
will contain \whp\ a perfect matching $M_2$, due to standard
results from the theory of random graphs. The union of $M_1$ and
$M_2$ will then form a perfect matching.
\end{asparaenum}

\begin{bibdiv}
\begin{biblist}[\normalsize]

\bib{AKS}{article}{
   author={Ajtai, M.},
   author={Koml{\'o}s, J.},
   author={Szemer{\'e}di, E.},
   title={First occurrence of Hamilton cycles in random graphs},
   conference={
      title={Cycles in graphs},
      address={Burnaby, B.C.},
      date={1982},
   },
   book={
      series={North-Holland Math. Stud.},
      volume={115},
      publisher={North-Holland},
      place={Amsterdam},
   },
   date={1985},
   pages={173--178},
}

\bib{ProbMethod}{book}{
   author={Alon, Noga},
   author={Spencer, Joel H.},
   title={The probabilistic method},
   edition={2},
   publisher={Wiley-Interscience},
   place={New York},
   date={2000},
   pages={xviii+301},
   isbn={0-471-37046-0},
}

\bib{ABKU}{article}{
   author={Azar, Yossi},
   author={Broder, Andrei Z.},
   author={Karlin, Anna R.},
   author={Upfal, Eli},
   title={Balanced allocations},
   journal={SIAM J. Comput.},
   volume={29},
   date={1999},
   number={1},
   pages={180--200},
}

\bib{BHKL}{article}{
   author={Benjamini, Itai},
   author={Haber, Simi},
   author={Krivelevich, Michael},
   author={Lubetzky, Eyal},
   title={The isoperimetric constant of the random graph process},
   journal={Random Structures Algorithms},
   volume={32},
   date={2008},
   number={1},
   pages={101--114},
}

\bib{BF01}{article}{
   author={Bohman, Tom},
   author={Frieze, Alan},
   title={Avoiding a giant component},
   journal={Random Structures Algorithms},
   volume={19},
   date={2001},
   number={1},
   pages={75--85},
}

\bib{BohF}{article}{
    AUTHOR = {Bohman, Thomas A.},
    AUTHOR = {Frieze, Alan},
    TITLE = {Hamilton cycles in 3-out},
    status = {preprint},
}

\bib{BFW}{article}{
   author={Bohman, Tom},
   author={Frieze, Alan},
   author={Wormald, Nicholas C.},
   title={Avoidance of a giant component in half the edge set of a random
   graph},
   journal={Random Structures Algorithms},
   volume={25},
   date={2004},
   number={4},
   pages={432--449},
}

\bib{BK}{article}{
   author={Bohman, Tom},
   author={Kravitz, David},
   title={Creating a giant component},
   journal={Combin. Probab. Comput.},
   volume={15},
   date={2006},
   number={4},
   pages={489--511},
}

\bib{RandomGraphs}{book}{
   author={Bollob{\'a}s, B{\'e}la},
   title={Random graphs},
   volume={73},
   edition={2},
   publisher={Cambridge University Press},
   place={Cambridge},
   date={2001},
   pages={xviii+498},
   isbn={0-521-80920-7},
   isbn={0-521-79722-5},
}

\bib{Bollobas}{article}{
   author={Bollob{\'a}s, B{\'e}la},
   title={The evolution of sparse graphs},
   conference={
      title={Graph theory and combinatorics},
      address={Cambridge},
      date={1983},
   },
   book={
      publisher={Academic Press},
      place={London},
   },
   date={1984},
   pages={35--57},
}

\bib{BFF}{article}{
   author={Bollob{\'a}s, B.},
   author={Fenner, T. I.},
   author={Frieze, A. M.},
   title={An algorithm for finding Hamilton paths and cycles in random
   graphs},
   journal={Combinatorica},
   volume={7},
   date={1987},
   number={4},
   pages={327--341},
   issn={0209-9683},
}

\bib{BolF}{article}{
   author={Bollob{\'a}s, B{\'e}la},
   author={Frieze, Alan M.},
   title={On matchings and Hamiltonian cycles in random graphs},
   conference={
      title={Random graphs '83},
      address={Pozna\'n},
      date={1983},
   },
   book={
      series={North-Holland Math. Stud.},
      volume={118},
      publisher={North-Holland},
      place={Amsterdam},
   },
   date={1985},
   pages={23--46},
}

\bib{BT}{article}{
   author={Bollob{\'a}s, B{\'e}la},
   author={Thomason, Andrew},
   title={Random graphs of small order},
   conference={
      title={Random graphs '83},
      address={Pozna\'n},
      date={1983},
   },
   book={
      series={North-Holland Math. Stud.},
      volume={118},
      publisher={North-Holland},
      place={Amsterdam},
   },
   date={1985},
   pages={47--97},
}


\bib{ER66}{article}{
   author={Erd{\H{o}}s, P.},
   author={R{\'e}nyi, A.},
   title={On the existence of a factor of degree one of a connected random graph},
   journal={Acta Math. Acad. Sci. Hungar.},
   volume={17},
   date={1966},
   number={3-4},
   pages={359--368},
   issn={0001-5954},
}

\bib{Feller}{book}{
   author={Feller, William},
   title={An introduction to probability theory and its applications.  Vol. I},
	    edition={3},
   publisher={John Wiley \& Sons Inc.},
   place={New York},
   date={1968},
   pages={xviii+509},
}

\bib{FGS}{article}{
   author={Flaxman, Abraham D.},
   author={Gamarnik, David},
   author={Sorkin, Gregory B.},
   title={Embracing the giant component},
   journal={Random Structures and Algorithms},
   volume={27},
   date={2005},
   number={3},
   pages={277--289},
}

\bib{FK}{article}{
    author={Frieze, Alan},
    author={Krivelevich, Michael},
    title={On two Hamilton cycle problems in random graphs},
    journal={Israel Journal of Mathematics},
    status={to appear}
}

\bib{JLR}{book}{
   author={Janson, Svante},
   author={{\L}uczak, Tomasz},
   author={Rucinski, Andrzej},
   title={Random graphs},
   series={Wiley-Interscience Series in Discrete Mathematics and
   Optimization},
   publisher={Wiley-Interscience, New York},
   date={2000},
   pages={xii+333},
}

\bib{JK}{book}{
   author={Johnson, Norman L.},
   author={Kotz, Samuel},
   title={Urn models and their application},
   note={An approach to modern discrete probability theory;
   Wiley Series in Probability and Mathematical Statistics},
   publisher={John Wiley \& Sons, New York-London-Sydney},
   date={1977},
   pages={xiii+402},
}

\bib{KS}{article}{
   author={Koml{\'o}s, J{\'a}nos},
   author={Szemer{\'e}di, Endre},
   title={Limit distribution for the existence of Hamiltonian cycles in a random graph},
   journal={Discrete Math.},
   volume={43},
   date={1983},
   number={1},
   pages={55--63},
}

\bib{KSS}{article}{
   author={Krivelevich, Michael},
   author={Loh, Po-Shen},
   author={Sudakov, Benny},
   title={Avoiding small subgraphs in Achlioptas processes},
   status={preprint},
}


\bib{MSS}{article}{
    author={Marciniszyn, Martin},
    author={Sp\"ohel, Reto},
    author={Steger, Angelika},
    title={Online Ramsey games in random graphs},
    status={preprint}
}

\bib{PSW}{article}{
   author={Pittel, Boris},
   author={Spencer, Joel},
   author={Wormald, Nicholas},
   title={Sudden emergence of a giant $k$-core in a random graph},
   journal={J. Combin. Theory Ser. B},
   volume={67},
   date={1996},
   number={1},
   pages={111--151},
   issn={0095-8956},
}

\bib{Posa}{article}{
   author={P{\'o}sa, L.},
   title={Hamiltonian circuits in random graphs},
   journal={Discrete Math.},
   volume={14},
   date={1976},
   number={4},
   pages={359--364},
   issn={0012-365X},
}

\bib{SV}{article}{
    author={Sudakov, Benjamin},
    author={Vu, Van},
    title={Resilience of graphs},
    journal={Random Structures and Algorithms},
    status={to appear}
}

\bib{WS}{article}{
author={Spencer, Joel},
   author={Wormald, Nicholas},
   title={Birth control for giants},
   journal={Combinatorica},
   volume={27},
   date={2007},
   number={5},
   pages={587--628},
}

\end{biblist}
\end{bibdiv}

\end{document}